\newcommand{\hol}{\mathfrak{hol}}
\newcommand{\C}{\mathbb{C}}
\newcommand{\CN}{\mathbb{C}^N}
\newcommand{\CNp}{\mathbb{C}^{N^\prime}}
\newcommand{\R}{\mathbb{R}}
\newcommand{\N}{\mathbb{N}}
\newcommand{\nequiv}{{\equiv \!\!\!\!\!\!  / \,\,}}
\newcommand{\holmaps}{\mathcal{H}}
\newcommand{\cps}[1]{\C\{#1\}}
\newcommand{\cpstwo}[2]{#1 \{#2\}}
\newcommand{\todo}[1]{}
\newcommand{\parcur}[1]{\mathfrak P^{#1}}
\newcommand{\tz}{\mathbf t_0}
\newlength{\extendaxesby}\setlength{\extendaxesby}{.4cm}
\DeclareMathOperator{\rank}{rk}
\DeclareMathOperator{\real}{Re}
\DeclareMathOperator{\im}{Im}
\DeclareMathOperator{\re}{Re}
\newtheorem*{theorem*}{Theorem}
 \newtheorem{thm}{Theorem}
\newtheorem{theorem}[thm]{Theorem}
\newtheorem{lem}[thm]{Lemma}
\newtheorem{lemma}[thm]{Lemma}
\newtheorem{cor}[thm]{Corollary}
\newtheorem{corollary}[thm]{Corollary}
\theoremstyle{definition}
\newtheorem{definition}[thm]{Definition}
\newtheorem{exa}{Example}
\newtheorem{example}[exa]{Example}
\newtheorem{remark}[thm]{Remark}
\begin{document}

\author{Giuseppe della Sala}
\address{Department of Mathematics, American University of Beirut}
\email{gd16@aub.edu.lb}
\author{Bernhard Lamel}
\address{Fakult\"at f\"ur Mathematik, Universit\"at Wien}
\email{bernhard.lamel@univie.ac.at}
\author{Michael Reiter}
\address{Fakult\"at f\"ur Mathematik, Universit\"at Wien}
\email{m.reiter@univie.ac.at}

\begin{abstract}
We study the deformation theory of CR maps in the positive 
codimensional case. In particular, we study structural properties
of the {\em mapping locus} $E$ of (germs of nondegenerate) holomorphic 
maps $H \colon (M,p) \to M'$ 
between generic real submanifolds $M \subset \CN$ and 
$M' \subset \CNp$, defined to be the set of points 
$p' \in M'$ which admit such a map with $H(p) = p'$.  
We  show  that this set $E$ is semi-analytic
and provide examples for which $E$ posseses (prescribed) singularities.
\end{abstract}

\subjclass[2010]{32H02, 32V40}

\title{Deformations of CR maps and applications}

\maketitle

%Things to do:
%\begin{itemize}
%\item Title (Singularities and deformations) \& sections separations, titles \& Intros for sections
%\item Introduction: What is the outline, main thread of the paper?
%\item some references are missing
%\end{itemize}

\section{Introduction} % (fold)
\label{sec:introduction}
For a holomorphic map 
$H \colon (\CN,p) \to \CNp$, it follows from an easy extension 
of Poincar\'e's counting argument \cite{Poincare07} that given 
a germ of a real-analytic submanifold $(M,p)$, ``most'' real 
analytic subvarieties
$M'\subset \CNp$ cannot be deformed 
biholomorphically to a variety containing $H(M)$. Therefore, 
given a germ of a real-analytic submanifold $(M,p)\subset (\CN,p)$
and a subvariety $M'\subset \CNp$, one can expect that only 
few points $p'\in M'$ allow for a (nonconstant) holomorphic map 
$H\colon (\CN,p) \to (\CNp,p') $ satisfying $H(M)\subset M'$. 
This leads us to define the {\em mapping locus}
\[ E = \left\{ p'\in M' \colon \exists H\colon (\CN,p) \to (\CNp,p')\text{ holomorphic }, H(M)\subset M', H \nequiv p'   \right\}. \]
In this paper we study properties of this set, which was 
introduced in the equidimensional setting 
where one considers {\em biholomorphisms} in the paper
 \cite{dSJL15} as the  {\em equivalence locus} $E(p)$ of $p\in M$. 
Our results are valid for maps $H$ which are 
usually called ``nondegenerate'' maps.

So let us give the necessary definitions for stating our results.
We denote 
the ideal associated to $M'$ at the point $p'$, consisting of all 
germs of real analytic functions vanishing on $M'$, by 
$\mathcal{I}_{p'} (M') \subset \cps{w-p', \overline{w - p'}}$, 
and the set of 
all germs of real-analytic CR vector fields tangent to $M$ near $p$ by 
$\Gamma_p (M)$. We say that such a map $H$ is  {\em $\ell$-finitely nondegenerate} at $p$ if 
\[ \dim_\C \left\{ \bar L_1 \cdots \bar L_k \varrho'_w (H(z), \overline{H(z)})|_{z=p}  \colon \bar L_j \in \Gamma_p(M), k\leq \ell , \,\varrho' \in \mathcal{I}_{H(p)} (M') \right\} = N'.\]

Given $M$ and $M'$ as above, and $\ell \in \N$, we define
the ($\ell$-finitely nondegenerate) {\em mapping locus} $E_\ell \subset E \subset M'$
consisting of all points $p' \in M'$ with the property that there exists an 
 $\ell$-finitely nondegenerate map $H\colon (M,p) \to M'$ with $H(p) = p'$. 

The rigidity properties 
 of real objects with respect to holomorphic maps already alluded to 
 above lead to interesting structural 
 properties this set. Apart from this, 
 there are several reasons motivating the study of $E$.
One
of the main possible applications is in the 
study of the moduli space of CR maps 
with respect to the actions of the automorphism groups of 
 $M$ and $M'$ from the right or the left, respectively. This is particularly
 interesting (and has been studied a lot) in 
 the case where $M$ and $M'$ are spheres. We refer the interested reader 
 to the survey of Huang and Ji \cite{HJ07} on this matter, and also 
 note that there is a notion of homotopy equivalence in this setting
 introduced by D'Angelo and Lebl \cite{DL16}. 

Another motivation is that when one studies deformations 
of {\em proper maps} between domains with 
real-analytic boundaries, then the existence of such
maps finds obstructions in the existence of 
maps between the boundaries very 
naturally, as all such maps extend holomorphically across the boundary 
in many settings (see e.g. the paper by Mir \cite{MR3667415}). Let 
us now state our first theorem:

\begin{theorem}
\label{thm:newmain} The mapping locus $E_\ell\subset M'$ is locally a semi-analytic set. 
\end{theorem}

In particular, we recover one 
of the main results of \cite{dSJL15} for the 
equivalence locus $E(p)$: since it 
is also homogeneous (by definition),  $E(p)\subset M$ is necessarily a locally closed submanifold.

One might wonder whether the mapping locus $E_{\ell}$, in contrast 
to the equidimensional case,  can 
have singularities if the codimension $N' - N$ is positive.  
We construct an example showing that this is 
actually the case (even if the source manifold is assumed to be 
very nice).
\begin{theorem}
\label{thm:singularLocus}
Let $M$ be the unit sphere in $\C^N$. Then there is a real hypersurface $M' \subset \C^{N+1}$ such that the mapping locus $E_{\ell}$, for any $\ell\geq 2$, is a singular subset of $M'$.
\end{theorem}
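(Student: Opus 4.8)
The plan is to produce $M'$ \emph{explicitly}, as a polynomial perturbation of the model hyperquadric in $\mathbb{C}^{N+1}$, and then to identify the mapping locus with the image of an explicit evaluation map whose singular value is built into the defining polynomial of $M'$. Since $M$ is the sphere, $\Aut(M)$ acts transitively by germs of holomorphic automorphisms, so it suffices to fix one base point and work in Heisenberg coordinates, writing $M=\{\im w = |z|^2\}$ with $z\in\mathbb{C}^{N-1}$ and base point $p=0$. A germ $H\colon (M,0)\to M'$ then has components $H=(f,g,h)$ valued in $\mathbb{C}^{N-1}\times\mathbb{C}\times\mathbb{C}$, where the extra coordinate $g$ is where the positive codimension is seen. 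I would take $M' = \{\im w' = |z'|^2 + |\zeta|^2 + P(z',\zeta,\bar z',\bar\zeta)\}$ for a carefully chosen real polynomial $P$ vanishing to high order, so that by definition $E_\ell$ is precisely the set of admissible base values $p'=H(0)$, and ``$E_\ell$ singular'' means that near some distinguished point $p'_0$ this semi-analytic set (semi-analyticity being guaranteed by \Cref{thm:newmain}) fails to be a locally closed submanifold.

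The second step is to parametrize the nondegenerate germs into $M'$ by a finite-dimensional real-analytic family and to make $H\mapsto H(0)$ explicit. Because the source is the sphere, any $\ell$-finitely nondegenerate $H$ is determined by a finite jet at $0$ via the reflection principle, and substituting $H$ into the defining equation of $M'$ and comparing coefficients yields polynomial relations on the jet parameters. I would choose $P$ so that these relations \emph{decouple}: using the rigidity of sphere-to-sphere maps (Webster/Huang-type classification), the pair $(f,h)$ is forced to be essentially the standard embedding, while the transversal coordinate $g$ is governed by a single low-order polynomial relation involving the remaining jet parameter $t$ and the base value $p'$. This displays $E_\ell$ near $p'_0$ as the image of an explicit map $t\mapsto p'(t)$. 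The prototype to keep in mind is the Whitney map $(z_1,z_2)\mapsto(z_1,z_1z_2,z_2^2)$ of $S^3\to S^5$, the simplest codimension-one $2$-finitely nondegenerate sphere map; modeling $g$ on such a quadratic deformation ensures the family members are $2$-nondegenerate (hence contribute to every $E_\ell$ with $\ell\geq 2$) but \emph{not} $1$-nondegenerate (explaining the hypothesis $\ell\geq 2$).

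The third step is to engineer the singularity. With $(f,h)$ pinned down, the residual relation between $t$ and $p'$ can be arranged, by the choice of $P$, so that $t\mapsto p'(t)$ has a standard singular semi-analytic image — for instance a cusp, or two smooth branches meeting transversally along the CR-transversal and contact directions of $M'$ — whose total space is genuinely singular at $p'_0$. Here I would take $P$ to be quadratic-plus-cubic in $\zeta$ with coefficients tuned to realize the prescribed model. By \Cref{thm:newmain} the set $E_\ell$ is semi-analytic, so it then suffices to verify from the explicit model that at $p'_0$ its tangent cone (or local defining ideal) is not that of a smooth manifold, which is a finite computation once the model is fixed.

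The main obstacle is the \textbf{upper bound}: showing that no \emph{further} $\ell$-nondegenerate germs exist whose base values would fill in or smooth $p'_0$. Concretely, I must prove that every admissible $H$ with $H(0)$ near $p'_0$ lies in the parametrized family, so that near $p'_0$ the full mapping locus coincides with the singular image; this is exactly where sphere rigidity, via the reflection identities combined with the constraints imposed by the specific $P$, must be pushed to rule out exotic maps. Granting this, the family is $2$-nondegenerate, so $E_\ell\cap U = E_2\cap U$ is the same singular set for every $\ell\geq 2$, proving the theorem uniformly in $\ell$. Two secondary checks accompany this: verifying that the bookkeeping of normalizations matches the number of free jet parameters, and confirming that $p'_0$ is attained by a genuinely $\ell$-nondegenerate (not merely formal or lower-order-degenerate) map.
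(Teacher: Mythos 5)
Your lower-bound construction is essentially the one in the paper: the paper also uses a perturbed quadric-type target, also uses Whitney-type quadratic embeddings (there $\psi_{\tau_0}(Z,w)=(z_1,\dots,z_n,z_1^2+\cdots+z_n^2+\tau_0,w)$), and the hypothesis $\ell\geq 2$ enters for exactly the reason you give ($2$-nondegeneracy of these maps). The genuine gap is the step you yourself flag as the main obstacle and then assume (``granting this''): the upper bound showing that no other nondegenerate germs can smooth out the singular point. The tool you propose for it cannot close it. Webster/Huang-type rigidity classifies maps between \emph{spheres} (or hyperquadrics); your target $M'=\{\im w'=|z'|^2+|\zeta|^2+P\}$ is by design not spherical (if it were, the mapping locus would be all of $M'$), and no classification of germs from the sphere into a general perturbed hyperquadric exists -- nor do the components $(f,h)$ of such a germ map the sphere into a sphere, so rigidity does not apply to them either. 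The reflection principle does give a finite jet parametrization of the candidate maps (this is the JPP underlying \cref{thm:newmain}), but it does not tell you \emph{which} jets are actually attained by maps into $M'$; determining that set is the original problem, so the argument is circular. A secondary problem is dimensional: $E_\ell$ can never look locally like the image of a curve $t\mapsto p'(t)$ (cusp or node), because transitivity of $\Aut(M)$ forces all of $H(M)$, a $(2N-1)$-dimensional set, into $E_\ell$ as soon as $H(p)\in E_\ell$; any singularity must occur transversally to these sheets, and the correct local model is $(\text{singular set})\times M$.

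The paper closes the upper bound with a classification-free idea that your outline is missing: a Poincar\'e-style dimension count in jet space. \cref{lem:semiB} shows that if \emph{any} holomorphic embedding of $M$ into $\{\rho'=0\}$ passes through a point, then the $k_0$-jet of $\rho'$ at that point lies in a semi-algebraic set $B$ of positive codimension, because the linear conditions imposed by containing a standard copy of $M$ outnumber, for large jet order, the dimension of the group of jets of ambient biholomorphisms. The target is then built as the image of $\phi(u,Z,\tau)=(Z,\,z_1^2+\cdots+z_n^2+\tau,\,u+i|Z|^2)+r(\tau,\bar\tau)(\alpha_1,\dots,\alpha_n,\beta,\gamma)$, where $Y=\{r=0\}\subset\C$ is a prescribed singular curve: on $Y$ the perturbation vanishes, so the Whitney-type embeddings give $C\cong Y\times M\subset E_\ell$ of dimension $2N$; off $Y$ one has $r\neq 0$, so one can solve $j^{k_0}(r\alpha)=$ any prescribed jet and thereby choose $\alpha_i,\beta,\gamma$ so that at one point $p_0\in M'$ the jet $j^{k_0}_{p_0}\rho'$ avoids the finitely many algebraic varieties containing $B$. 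Consequently $\widetilde E=\{p\in M':\, j^{k_0}_p\rho'\in B\}$ is a proper semi-analytic subset of $M'$ of dimension at most $2N$, and the squeeze $C\subset E_\ell\subset\widetilde E$ with $\dim C=\dim\widetilde E$ exhibits the singular set $C$ as an irreducible component of $E_\ell$; the locus is never computed exactly, and no map into $M'$ is ever classified. Replacing your rigidity step by such a jet-counting and genericity argument is what would make your construction work.
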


Our approach to studying the mapping locus is 
to consider the variation of the image point $p' \in M'$ as 
a deformation of $M'$ and to deduce the semi-analyticity result 
from a more general semi-analyticity result valid for general 
deformations of $M'$.

This approach allows us 
to shed additional light on the mapping locus in 
some interesting cases. We point out one instance of this 
here: the degeneration 
of the mapping locus to a point can be checked by a sufficient 
linear criterion, which allows us to recover a statement already 
implicit in results of \cite{dSLR17} from our considerations of 
deformations.  
%Let us assume that $H\colon (M,p) \to (M',p')$ 
%is an $\ell$-finitely nondegenerate map. We say that 
%$(v,Y(z)) \in T_{p'} M' \times \fps{z-p}^{N'}$ (where we 
%think of  $T_{p'} M' \subset \C^{N'}$) 
%is an infinitesimal deformation of $H$ if 
%\[ \real \left(   \varrho_w (H(z), \overline H(z)) \cdot(Y(z) + v)  \right)\big|_{z\in M} = 0.   \] 
%We note that the set of infinitesimal deformations of $H$ is 
%a real subspace of $T_{p'} M' \times \fps{z-p}^{N'}$. 

We recall the necessary definitions: 
Assume that $H\colon (M,p) \to (M',p')$ 
is an $\ell$-finitely nondegenerate map. We say that 
$Y(z) \in \cps{z-p}^{N'}$ is an infinitesimal deformation of $H$ if 
$Y(p) = 0$ and if
\[ \real \left(   \varrho_w (H(z), \overline{H(z)}) \cdot Y(z)  \right)\big|_{z\in M} = 0.   \] 
We note that we shall see later 
that the set of infinitesimal deformations of $H$ is 
a (finite-dimensional) real subspace $\hol(H)$ of $\cps{z-p}^{N'}$, 
which is tightly related to the 
tangent space of the set of possible maps of $(M,p)$ into $M'$. 

\begin{theorem}
\label{thm:infdefmaplocus} Assume $H$ is $\ell$-finitely nondegenerate and 
that there exist no nontrivial infinitesimal deformations of $H$, i.e. $\dim_\R \hol(H) = 0$. Then there exists a neighbourhood
$U$ of $p'$ such that  $E_\ell \cap U = \{ p' \}$. 
\end{theorem}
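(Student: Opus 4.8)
The plan is to argue by contradiction, using the semi-analyticity of $E_\ell$ established in \cref{thm:newmain}. Suppose $p'$ were not isolated in $E_\ell$. Since $E_\ell$ is semi-analytic near $p'$, the curve selection lemma produces a nonconstant real-analytic arc $\gamma\colon(-\varepsilon,\varepsilon)\to E_\ell$ with $\gamma(0)=p'$ and $\dot\gamma(0)\neq 0$. By definition each $\gamma(t)$ is the image point of some $\ell$-finitely nondegenerate map of $(M,p)$ into $M'$, and the goal is to differentiate such a family at $t=0$ in order to manufacture a nontrivial infinitesimal deformation of $H$, contradicting $\dim_\R\hol(H)=0$.

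The first, and decisive, step is to realize the points $\gamma(t)$ by an \emph{analytic family} of maps $H_t$ with $H_0=H$. Here $\ell$-finite nondegeneracy is essential: an $\ell$-finitely nondegenerate map is determined by its $\ell$-jet at $p$ through the associated reflection identities, so the admissible maps are parametrized by a real-analytic (indeed semi-analytic) subset of a finite-dimensional jet space, on which $H\mapsto H(p)$ is real-analytic with image $E_\ell$. Lifting $\gamma$ through this parametrization, and using a limiting argument on the uniformly jet-bounded family of witnesses to anchor the lift at the jet of $H$, yields a real-analytic family $t\mapsto H_t$, $H_0=H$, with $H_t(p)=\gamma(t)$. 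Following the deformation-of-target viewpoint I then pass to $v(t):=\gamma(t)-p'$ and $K_t:=H_t-v(t)$, so that $K_t$ maps $(M,p)$ into the translated target $M'-v(t)$ with image point \emph{fixed}, $K_t(p)=p'$, and $K_0=H$.

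Differentiating, for each $\varrho\in\mathcal{I}_{p'}(M')$, the identity $\varrho\big(K_t+v(t),\overline{K_t+v(t)}\big)\big|_{z\in M}=0$ at $t=0$ and using $\varrho(H,\overline H)|_M=0$ gives
\[ \real\Big(\varrho_w\big(H(z),\overline{H(z)}\big)\cdot\big(\dot K_0(z)+\dot v(0)\big)\Big)\Big|_{z\in M}=0. \]
Thus $Z:=\dot K_0+\dot v(0)\in\cps{z-p}^{N'}$ solves the linearized equation defining an infinitesimal deformation of $H$, while $Z(p)=\dot K_0(p)+\dot v(0)=\dot\gamma(0)\neq0$ because $K_t(p)\equiv p'$. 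Hence $Z$ is a nontrivial infinitesimal deformation, contradicting $\hol(H)=0$; taking $U$ to be a neighbourhood on which the chosen semi-analytic representative of $E_\ell$ lives then gives $E_\ell\cap U=\{p'\}$.

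I expect the main obstacle to be precisely the lifting in the second step: passing from the mere existence of a witnessing map for each $\gamma(t)$ to a family depending analytically on $t$ and based at $H$. This forces one to control \emph{all} maps with image near $p'$, not only small deformations of $H$; I would handle this through the finite-jet parametrization coming from $\ell$-finite nondegeneracy (which makes the relevant families of maps finite-dimensional and their jets locally bounded, hence normal), together with the rigidity furnished by $\hol(H)=0$ to guarantee that the witnesses accumulate on $H$ and that the selected arc is genuinely nonconstant. Once the analytic family is in hand, the reality computation and the contradiction are routine.
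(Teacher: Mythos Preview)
The genuine gap is in your lifting step, and the remedy you sketch does not close it. For each $t$ along the arc $\gamma$ you select a witness $H_t$; even after passing to the finite-dimensional jet variety $A\subset X\times J_p^{\tz}$ there is no reason the jets $j_p^{\tz}H_t$ stay bounded (the jet space is not compact and the defining condition $q_j\neq 0$ is open), and even if they did, a subsequential limit is only \emph{some} map $H_\infty$ with $H_\infty(p)=p'$, not necessarily $H$. Appealing to $\hol(H)=0$ to force $H_\infty=H$ is circular: that hypothesis constrains $H$ alone and says nothing about other maps based at $p'$ or about their infinitesimal deformations. So if you land at $H_\infty\neq H$, your differentiation produces a nonzero element of $\hol(H_\infty)$, not of $\hol(H)$, and no contradiction follows.

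The paper avoids this entirely; it does not use \cref{thm:newmain} or curve selection. It argues directly on the jet variety $A$ via the upper semicontinuity of $(\epsilon,G)\mapsto\dim\hol(G,\mathfrak D)$ (\cref{rem:trivial2}): since $\dim\hol(H,\mathfrak D)=0$ at $(p',H)$ (equivalent to $\dim\hol(H)=0$ by \cref{lem:differentHol}), there is a neighbourhood of $(p',j_p^{\tz}H)$ in $X\times J_p^{\tz}$ in which $\dim\hol\le 0$ throughout, and \cref{lem:dimEll} then forces every submanifold of $A$ meeting that neighbourhood to be zero-dimensional; hence $A$ is discrete there and $H$ is isolated in $\mathcal F$ (\cref{thm:trivial}). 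Semicontinuity is precisely the missing ingredient that lets information at $H$ propagate to nearby maps. If you want to rescue a curve-selection argument, the clean fix is to apply it to the semi-analytic set $A$ near $(p',j_p^{\tz}H)$ rather than to $E_\ell$: the selected arc then passes through the correct base point by construction, the lift is free, and your differentiation goes through verbatim to produce a nonzero element of $\hol(H,\mathfrak D)$.
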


\cref{thm:newmain} and \cref{thm:infdefmaplocus} are obtained by a more general study of analytic deformations of the target manifold $M'$ together with the jet parametrization technique for CR maps, see \cref{sec:prelim} for all relevant definitions. Their proofs are given in \cref{sec:propLocus} and \cref{section:mapsJPP}. The proof of \cref{thm:singularLocus} is given in \cref{sec:singularlocus}. Finally, \cref{sec:examples} contains a list of examples illustrating various properties of the mapping locus.
% section introduction (end)

\section{Preliminaries and further results}
\label{sec:prelim}

This section introduces all the relevant notions and notations used throughout the paper; we also state the more general theorems from which 
we will deduce the theorems stated in the introduction.

\subsection{Manifolds, maps and deformations}
Let $\mathcal H((\C^N,p),\C^{N'})$ be the set of germs at $p$ of maps from $\C^N$ to $\C^{N'}$. This space is endowed with the inductive limit topology with respect to the Banach spaces $\mathcal H((\overline{B_R(p)},p),\C^{N'})$, where $B_R(p)$ denotes the ball of radius $R>0$ in $\C^N$ centered at $p$. In the following every subspace of $\mathcal H((\C^N,p),\C^{N'})$ will be equipped with the induced topology.

%Let $\mathcal H((\C^N,p),(\C^{N'},p'))$ be the set of germs at $p$ of maps $H$ from $\C^N$ to $\C^{N'}$ with $H(p) = p'$. This space is endowed with inductive limit topology with respect to the Banach spaces $\mathcal H((\overline{B_R(p)},p),(\C^{N'},p'))$, where $B_R(p)$ denotes the ball of radius $R>0$ in $\C^N$ centered at $p$. In the following every subspace of $\mathcal H((\C^N,p),(\C^{N'},p'))$ will be equipped with induced topology.

We define $\mathcal H((\C^N,p),(\C^{N'},p'))\subset \mathcal H((\C^N,p),\C^{N'})$ as the subset of $\mathcal H((\C^N,p),\C^{N'})$ of maps $H$ satisfying $H(p)=p'$.

Let $M \subset \C^N$ be a generic real-analytic submanifold and  $M' \subset \C^{N'}$ a real-analytic subvariety. A holomorphic map $H: M \rightarrow M'$ can be considered as the  restriction of a holomorphic map $H$ defined on a neighborhood of $M$. We denote by $\mathcal H(M,M')$ the collection of all holomorphic maps sending $M$ into $M'$. 

If $(M,p) \subset \C^N$ is a germ of a real-analytic submanifold and $(M',p') \subset \C^{N'}$ is a germ of a real-analytic subvariety  we denote by $\mathcal H((M,p),(M',p'))$ the collection of holomorphic maps $H$ sending $(M,p)$ into $(M',p')$ (in particular $H(p) = p'$).  

Often we need to consider subsets of $\mathcal H(M,M')$ or $\mathcal H((M,p),(M',p'))$ satisfying certain in some sense good geometric and analytic properties, especially those admitting a jet parametrization. An example is given by the class of \textit{finitely nondegenerate maps} (see \cref{section:mapsJPP}). We will generically denote such a subset of maps by $\mathcal F \subset \mathcal H(M,M')$ or $\mathcal F \subset \mathcal H((M,p),(M',p'))$.

In the paper we will need to treat not only the case of a fixed target set 
$M'$, but also the case of a \textit{deformation} of $M'$; that is a family of subvarieties $M'_\epsilon$ including $M'$. 
More precisely, we extend a definition taken from \cite{dSJL15}; we assume that the reader is 
acquainted with the basics of semi-analytic geometry, but refer to \cref{sec:realGeometry} for 
a discussion of the notions we use here.

\begin{definition}
\label{def:deformation}
Let $X$ be a semi-analytic compact set in some $\R^m$. Let $(M',p')$ be a germ of a real-analytic subvariety of $\C^{N'}$. 
A \textit{deformation}
 $\mathfrak D  = (M'_\epsilon,p')_{\epsilon \in X}$ of $(M',p')$ is given by 
a family of subvarieties $M'_\epsilon \subset \C^{N'}$, depending 
analytically on $\epsilon \in X$ in the sense that 
there exist $\varrho_1 (w, \bar w,\epsilon) , \dots , \varrho_d ( w , \bar w,\epsilon) \in \cpstwo{\mathcal{C}^\omega (X)} {w - p',\overline{w-p'}}$ such that 
\[ \mathcal{I}_{p'} (M'_\varepsilon) = \left( \varrho_1 (w, \bar w, \epsilon), \dots ,  \varrho_d (w, \bar w, \epsilon) \right),  \]
and  $M'_{\epsilon_0} = M'$ for some $\epsilon_0 \in X$. We write 
$\mathcal{I} (\mathfrak{D})$ for the set of all 
$\varrho (w, \bar w, \epsilon) \in \cpstwo{\mathcal{C}^\omega (X)} {w - p',\overline{w-p'}}$ satisfying $\varrho( \cdot, \cdot, \epsilon) \in \mathcal{I}_{p'} (M'_\epsilon) $ for every $\epsilon \in X$.

% Equivalently, we can require that we are given a real-analytic subset 
% $\hat M \subset \CNp \times X$ such that for the projections  $\pi_j$  onto the $j$-th component for $j=1,2$, we have 
% $\pi_2 (\hat M) =X$, and where  for every $\epsilon \in X$ we have
% $M'_\epsilon = \pi_1 \circ \pi_2^{-1} (\epsilon)$, and $M'_{\epsilon_0} = M'$. 

 A \textit{base-point-type deformation} of $M'$ is a deformation obtained in the following way: We choose $r>0$ small enough, 
take $X = M'\cap \overline{B_r (p')}$, 
 and for all $q' \in X$ we define 
 the germ $(M'_{q'},p')$ as $(M'+p'-q',p')$, where $M' + w' = \{v'+w': v' \in M'\}$, which is  a deformation in our sense, 
 as we can use 
 $\varrho_j (w, \bar w, q') = \varrho_j (w-(p'- q'), \overline{w -( p'-q')}) $ for any generating set $\varrho_1 , \dots ,\varrho_d \in \mathcal{I}_{p'} (M')$.

% It is easy to see that $\hat M = \bigcup_{q' \in M'}  M'_{q'} \times \{q'\} \subset \C^{N'} \times M'$ satisfies the above properties of a  deformation.
\end{definition}

It is useful to study the infinitesimal notion corresponding to deformations of maps; compare with \cite{dSLR15a,dSLR15b,dSLR17,CH02}.

Let $H: M \rightarrow \CNp$ be a real-analytic CR map satisfying
$H(M)\subset M'$. We denote by $\Gamma_H = \Gamma_{CR}(H^{*}(\mathbb C T(\mathbb C^{N'})))$ the space of real-analytic CR sections of the pull back bundle of $\mathbb C T(\mathbb C^{N'})$ with respect to $H$, cf. \cite{dSLR17}.

\begin{definition}
\label{def:infdefdef}
Let $(M,p)$ and $\mathfrak D =((M'_{\epsilon})_{\epsilon \in X},p')$ be as above with $M'_{\epsilon} = \{\rho'(\cdot, \cdot,\epsilon) = 0\}$. Let $\epsilon_0 \in X^{{\rm reg}}$ and $H_{\epsilon_0}: (M,p) \rightarrow (M'_{\epsilon_0},p')$ be in $\mathcal F$. We say that an element $(v,Y) \in T_{\epsilon_0} X \times \Gamma_{H_{\epsilon_0}}$ is an \textit{infinitesimal deformation} of $H_{\epsilon_0}$ into $\mathfrak{D}$ if $Y(p)=0$ and the following equations are satisfied:
\begin{align*}
2\re\left(\varrho_{w}\left(H_{\epsilon_0}(Z),\bar H_{\epsilon_0}(\bar Z),\epsilon_0\right)\cdot Y(Z) \right) + \varrho_{\epsilon}\left(H_{\epsilon_0}(Z),\bar H_{\epsilon_0}(\bar Z),\epsilon_0\right) \cdot v = 0, 
\end{align*}
for all $\varrho \in \mathcal{I} (\mathfrak{D})$ and $Z\in M$.
We denote the space of all infinitesimal deformations of $H_{\epsilon_0}$ into $\mathfrak{D}$ by $\hol(H_{\epsilon_0},\mathfrak{D})$. 
\end{definition}

\begin{remark}
\label{rem:derivDef}
If we consider a curve $(\epsilon(t),H(t))$ with $H(t): (M,p) \rightarrow (M'_{\epsilon(t)},p')$ for $\epsilon(t) \in X$ then $(v,Y) = (\epsilon'(0),\frac{d}{dt}|_{t=0} H(t))$ (note that $(\frac{d}{dt}|_{t=0} H(t))(p) = 0$) belongs to $\hol(H(0),\mathfrak{D})$.
The proof is the same as \cite[Lemma 21]{dSLR15a}.
\end{remark}

\begin{definition}
\label{def:parcur}
Let $H(t)\subset \holmaps(M, \C^{N'})$ be a smooth curve such that $H(0)\in \holmaps(M,M'_{\epsilon_0})$ and $\epsilon(t)$ a smooth curve in $X$ with $\epsilon(0)=\epsilon_0$. We say that $(\epsilon(t),H(t))$ is \emph{tangent to $\holmaps(M,M'_\epsilon)$ to order $r$} at $(\epsilon_0,H(0))$ if for any local 
parametrization $Z(s)$ of $M$  we have that $\varrho(H(Z(s),t), \overline{H(Z(s),t)},\epsilon(t))=O(t^{r+1})$ for any $\varrho \in \mathcal I_{H(0)}(M'_\epsilon)$. We denote the set of such parametrized curves by $\parcur{r}$ (or $\parcur{r}_{(\epsilon_0,H)}$ if we need to emphasize that $\epsilon(0)=\epsilon_0$ and $H=H(0)$).
\end{definition}

\begin{definition}\label{def:higherorderinfdef} 
Let $(M,p)$ and $\mathfrak D =((M'_{\epsilon})_{\epsilon \in X},p')$ be as above. Let $(\epsilon,H) \in X^{\rm reg} \times \mathcal F$.
% We consider a neighborhood $U$ of $\epsilon$ such that $X \cap U \cong \R^m$.
We say that $(w,Y) \in (T_{\epsilon_0} X)^k \times \Gamma_H^k$, where $\Gamma_H^k = \Gamma_H \times \dots \times \Gamma_H$, is an  \textit{infinitesimal deformation of $H$ of order $k$}, and write  $(w,Y)\in \hol^k(H,\mathfrak D)$ if $\tau_k(w,Y) \coloneqq (\epsilon + t w_1 + \cdots t^k w_k,H + t Y_1 + \cdots + t^k Y_k) \in \mathcal H_{(\epsilon,H)}[t]\cap \parcur{k}_{(\epsilon,H)}$.
\end{definition}

Note that for $k=1$ we recover $\hol(H,\mathfrak{D})$ given in \cref{def:infdefdef}.

In the next lemma we use the following identity: If $\sigma$ is a real-valued function on $\C^n \cong \R^{2n}$ we can write $\sigma(p) = \tilde\sigma(p,\bar p)$ for $p\in \C^n$. Then for any $v\in \C^n$ we have $\sigma_p \cdot v = \tilde\sigma_p \cdot v + \tilde \sigma_{\bar p} \cdot \bar v$, where the first $\cdot$ denotes the inner product in $\R^{2n}$ (hence $v$ has been written as a vector in $\R^{2n}$) and the second $\cdot$ is given by $a \cdot b \coloneqq a_1 b_1 + \cdots + a_n b_n$ for $a=(a_1,\ldots, a_n) \in \C^n$ and $b=(b_1,\ldots, b_n) \in \C^n$. 

\begin{lemma}
\label{lem:differentHol}
Let $(M,p)$ be as above, and consider the base-point-type deformation of $M'$ and $H:M \rightarrow M'$ a holomorphic map. Let $\hol(H)$ be as in \cite{dSLR17}. Then $(v,Y) \in \hol(H,\mathfrak D)$ if and only if $Y+v \in \hol(H)$. Moreover, the map $\hol(H,\mathfrak D) \ni (v,Y) \mapsto Y+v \in \hol(H)$ is an isomorphism.
\end{lemma}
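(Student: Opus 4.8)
The plan is to reduce both sides to explicit $\mathbb{R}$-linear systems and then check directly that $(v,Y)\mapsto Y+v$ carries one onto the other; conceptually, passing to the base-point-type deformation is exactly the device that lets the image point move along $M'$, with the tangent vector $v$ recording that motion. First I would fix a generating set $\varrho_1,\dots,\varrho_d$ of $\mathcal I_{p'}(M')$ and recall that for the base-point-type deformation one may take $\varrho_j(w,\bar w,q')=\varrho_j(w-(p'-q'),\overline{w-(p'-q')})$, with $\epsilon_0=p'$, so that $M'_{\epsilon_0}=M'$ and $\varrho_{j,w}(H(Z),\bar H(\bar Z),\epsilon_0)=\varrho_{j,w}(H(Z),\bar H(\bar Z))$. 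Because both defining conditions are $\mathbb{R}$-linear in $\varrho$ and every term is evaluated along $H(Z)\in M'$, where the $\varrho_j$ themselves vanish, the coefficient-function contributions in a general element of $\mathcal I(\mathfrak D)$ drop out, and it suffices to verify the equivalence on the generators $\varrho_1,\dots,\varrho_d$.

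The heart of the matter is the $\epsilon$-derivative. Differentiating $\varrho_j(w-(p'-q'),\overline{w-(p'-q')})$ in $q'$ along $v\in T_{p'}X=T_{p'}M'$ at $q'=p'$, the inner map becomes $w\mapsto w+tv$, so by the chain rule and the real-differentiation identity recalled just before the lemma,
\[ \varrho_{j,\epsilon}(w,\bar w,p')\cdot v=\varrho_{j,w}(w,\bar w)\cdot v+\varrho_{j,\bar w}(w,\bar w)\cdot \bar v=2\re\big(\varrho_{j,w}(w,\bar w)\cdot v\big), \]
using that $\varrho_j$ is real-valued. Evaluating at $w=H(Z)$ and substituting into the defining equation of $\hol(H,\mathfrak D)$ converts $2\re(\varrho_{j,w}(H,\bar H)\cdot Y)+\varrho_{j,\epsilon}(H,\bar H,\epsilon_0)\cdot v=0$ into $2\re(\varrho_{j,w}(H,\bar H)\cdot (Y+v))=0$, which is precisely the condition defining $\hol(H)$.

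It then remains to see that $\Phi(v,Y):=Y+v$ is a well-defined linear isomorphism. Well-definedness and linearity are immediate from the previous paragraph. For injectivity, evaluating $Y+v$ at $p$ yields $v$ (as $Y(p)=0$), so $Y+v=0$ forces $v=0$ and hence $Y=0$. For surjectivity, given $\tilde Y\in\hol(H)$, I would evaluate its defining equation at $z=p$: since $\re(\varrho_{j,w}(p',\overline{p'})\cdot\tilde Y(p))=0$ for all $j$ and $T_{p'}M'=\{u:\re(\varrho_{j,w}(p',\overline{p'})\cdot u)=0\ \forall j\}$, we get $\tilde Y(p)\in T_{p'}M'=T_{p'}X$. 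Setting $v:=\tilde Y(p)$ and $Y:=\tilde Y-v$ then gives $Y(p)=0$, $Y\in\Gamma_H$ (constants being CR sections), and, reversing the computation above, $(v,Y)\in\hol(H,\mathfrak D)$ with $\Phi(v,Y)=\tilde Y$.

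I expect the only genuinely delicate points to be the $\epsilon$-derivative computation together with its bookkeeping: correctly identifying $T_{\epsilon_0}X$ with $T_{p'}M'$, verifying that every $\tilde Y\in\hol(H)$ automatically satisfies $\tilde Y(p)\in T_{p'}M'$ so that the splitting in the surjectivity step is legitimate, and justifying the reduction to the generators $\varrho_j$ even though $\mathcal I(\mathfrak D)$ is a larger module. Once the real-differentiation identity is in place these are routine, and the remainder is formal linear algebra.
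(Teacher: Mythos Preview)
Your proposal is correct and follows essentially the same approach as the paper: compute the $\epsilon$-derivative of the base-point-type defining functions via the real-differentiation identity to obtain $\varrho_{j,\epsilon}\cdot v=2\re(\varrho_{j,w}\cdot v)$, then combine terms to pass between the two linear systems. If anything, your treatment of the isomorphism is more careful than the paper's---you explicitly verify $\tilde Y(p)\in T_{p'}M'$ for surjectivity, whereas the paper simply writes down an inverse map (with what appears to be a sign typo).
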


\begin{proof}
Assume $p=0$ and $p'=0$ and denote by $\varrho_1,\ldots, \varrho_d$ a set of generators of $\mathcal I_0(M')$. By definition $\rho_j(Z',\bar Z',\epsilon) = \varrho_j(Z'+\epsilon,\bar Z'+\bar \epsilon)$ for $\epsilon\in M'$ and $1\leq j \leq d$ is a set of generators for $\mathcal I_0(M'_\epsilon)$.
Since ${\rho_j}_{Z'} = {\varrho_j}_{Z'}$ and ${\rho_j}_{\epsilon} =  2 \re ({\varrho_j}_{Z'})$ it holds that $(v,Y) \in \hol(H,\mathfrak{D})$ if and only if for $1\leq j\leq d$
\begin{align*}
& 2\re\left({\rho_j}_{Z'}\left(H(Z),\bar H(\bar Z),\epsilon\right)\cdot Y(Z) \right) + {\rho_j}_{\epsilon}\left(H(Z),\bar H(\bar Z),\epsilon\right) \cdot v = 0, \quad Z\in M,  \\
%& 2\re\left({\varrho_j}_{Z'}\left(H(Z),\bar H(\bar Z)\right)\cdot Y(Z)  \right) - 2 \re\left({\varrho_j}}_{Z'}\left(H(Z),\bar H(\bar Z)\right) \cdot v \right) \\
\Longleftrightarrow \quad  & 2\re\left({\varrho_j}_{Z'}\left(H(Z),\bar H(\bar Z)\right)\cdot (Y(Z)+v) \right) = 0, \quad Z\in M, 1\leq j \leq d.
\end{align*}
The last equation is satisfied if and only if $Y+v \in \hol(H)$. The last statement follows from the fact that $\hol(H) \ni W \mapsto (W(0),W+W(0)) \in \hol(H,\mathfrak{D})$ is an inverse to the map given in the hypothesis.
\end{proof}

\begin{corollary}
\label{cor:differentHol}
If $\pi_1$ is the projection on the first factor, then $\pi_1(\hol(H,\mathfrak{D})) = \hol(H)(p) = \{X(p): X \in \hol(H)\}$.
\end{corollary}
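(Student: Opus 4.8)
The plan is to read off the statement directly from the isomorphism $\Phi\colon (v,Y)\mapsto Y+v$ established in \cref{lem:differentHol}. Working in the normalization $p=0$, $p'=0$ of that lemma, recall that $\Phi$ is a linear isomorphism from $\hol(H,\mathfrak{D})$ onto $\hol(H)$ whose inverse sends $W\in\hol(H)$ to $(W(0),\,W-W(0))$; in particular every element of $\hol(H,\mathfrak{D})$ has the form $(W(0),\,W-W(0))$ for a unique $W\in\hol(H)$. Under this identification the projection $\pi_1$ onto the first factor is nothing but the evaluation map $W\mapsto W(0)=W(p)$, which makes the asserted equality transparent. So the whole proof amounts to unwinding $\Phi$ and checking the two inclusions.

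First I would prove $\pi_1(\hol(H,\mathfrak{D}))\subseteq\hol(H)(p)$. Let $(v,Y)\in\hol(H,\mathfrak{D})$ and put $W\coloneqq Y+v\in\hol(H)$, which is legitimate by \cref{lem:differentHol}. Since $Y$ is an infinitesimal deformation we have $Y(p)=0$, whence $W(p)=Y(p)+v=v$, so $v=W(p)\in\hol(H)(p)$. For the reverse inclusion $\hol(H)(p)\subseteq\pi_1(\hol(H,\mathfrak{D}))$, take $W\in\hol(H)$ and set $v\coloneqq W(p)$ and $Y\coloneqq W-W(p)$. Then $Y(p)=0$, and $Y+v=W\in\hol(H)$, so \cref{lem:differentHol} gives $(v,Y)\in\hol(H,\mathfrak{D})$; applying $\pi_1$ yields $v=W(p)$. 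The two inclusions give the claimed equality.

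The only point that requires a moment's care --- and the step I would treat as the genuine (if mild) obstacle --- is that the constant $v=W(p)$ produced above really is an admissible first coordinate, i.e.\ that it lies in $T_{\epsilon_0}X$. For the base-point-type deformation one has $\epsilon_0=p'$ and $X=M'\cap\overline{B_r(p')}$, so $T_{\epsilon_0}X=T_{p'}M'$, and we must verify $W(p)\in T_{p'}M'$. This follows by evaluating the defining equation of $\hol(H)$ at $z=p$, where $H(p)=p'$: for every generator $\varrho_j$ of $\mathcal I_{p'}(M')$ we obtain $\real\bigl(\varrho_{j,w}(p',\bar p')\cdot W(p)\bigr)=0$. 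By the identity $\sigma_p\cdot v=2\real(\tilde\sigma_p\cdot v)$ recorded before \cref{lem:differentHol} (valid for real-valued $\sigma$), these are precisely the equations $d\varrho_j|_{p'}(W(p))=0$ cutting out $T_{p'}M'$, so indeed $W(p)\in T_{p'}M'$. With this consistency check in place the argument above is complete; everything else is a formal consequence of the isomorphism in \cref{lem:differentHol}.
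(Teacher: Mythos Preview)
Your proof is correct and follows the same approach the paper intends: the corollary is stated immediately after \cref{lem:differentHol} with no proof, so it is meant to be read off directly from the isomorphism $(v,Y)\mapsto Y+v$ and its inverse $W\mapsto (W(0),W-W(0))$, which is precisely what you do. Your additional check that $W(p)\in T_{p'}M'$ is a nice clarification, though the paper evidently considers this folded into the statement of the lemma (since the inverse is asserted to land in $\hol(H,\mathfrak{D})$).
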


We will be particularly interested in studying the set of deformation parameters $\epsilon$ for which a map between $M$ and $M'_\epsilon$ exists. We will call this set the \textit{mapping locus}, more precisely we have the following definition, cf. the definition of equivalence locus from \cite{dSJL15} for the equidimensional case.

\begin{definition}\label{def:maplocus}
Let $(M,p)$ be a germ of submanifold of $\C^N$, and let $(M'_\epsilon,p')_{\epsilon\in X}$ be a  deformation of $(M',p')=(M'_{\epsilon_0},p')\in \C^{N'}$. Let $\mathcal F_\epsilon \subset \mathcal H((M,p),(M'_\epsilon,p'))$ and $\mathcal F = (\mathcal F_\epsilon)_{\epsilon \in X}$.  We define the \emph{$\mathcal F$-mapping locus} as the set $E_{\mathcal F}\subset X$ given by 
\[E_{\mathcal F}=\{\epsilon\in X| \mathcal F_{\epsilon} \neq \emptyset\}.\]
In other words, $\epsilon \in E_{\mathcal F}$ if and only if there exits a holomorphic map $H: M \rightarrow M'_{\epsilon}$ with $H(p) = p'$ and $H \in \mathcal F_{\epsilon}$.
In particular if we consider base-point-type deformations, then $E_{\mathcal F}\subset M'$. 
\end{definition}

\subsection{Jet spaces}
We will work with maps by means of their jet through suitable parametrization results. The following definitions are very standard and here we mainly aim at establishing the notation used later in the paper. 
For all $p =(p_1,\ldots, p_N)\in \C^N$ we define the space of $k$-jets at $p$ of holomorphic maps $\mathbb C^N\to \mathbb C^{N'}$ as follows:
\begin{align*}
J_p^k = \faktor{\mathbb C \{Z-p\}^{N'} }{\mathfrak{m}_p^{k+1}},
\end{align*}
where $\mathfrak{m}_p=(Z_1-p_1,\ldots,Z_N-p_N)$ is the maximal ideal of the ring of power series centered at $p$, and $j_p^k$ denotes the natural projection. For a given $k$, we will denote by $\Lambda$ the coordinates in $J_p^k$.

\subsection{Jet parametrization} % (fold)
\label{sub:jet_parametrization}
% Let $M \subset \C^N$ and $M' \subset \C^{N'}$ be real-analytic submanifolds and consider a set  $\mathcal{F} \subset \mathcal H((M,p),M')$  of germs at $p$ of holomorphic mappings $H$ sending $M$ into $M'$. 
It turns out that our structural results hold in higher generality
than the setting discussed in the introduction. The methods apply 
equally well to understanding the structure of $\mathcal{F}$ if 
we assume that $\mathcal{F}$ satisfies the following
{\em jet parametrization property} JPP (see \cref{def:jetparamdef}). 
% \section{The jet parametrization property}
% \label{section:JPP}

% The basic technical tool we employ in our study of the mapping locus is the jet parametrization technique. 
Jet parametrization results can be proved in a variety of different contexts and  have been used widely in the study of the structure of CR mappings, see e.g. 
\cite{BER97,BER99,LM07,JL13a,LMZ08}. 
In the following definition we abstractly define 
what we need from such a parametrization to obtain the desired 
structural results. In \cref{section:mapsJPP} we are going to consider classes of maps which satisfy the jet parametrization property.

\begin{definition}
\label{def:jetparamdef}
Let $(M,p)$ be a germ of submanifold of $\C^N$, and let $\mathfrak{D} = (M'_\epsilon,p')_{\epsilon\in X}$ be a germ of real-analytic deformation of $(M',p')=(M'_{\epsilon_0},p')\in \C^{N'}$, where $\epsilon_0$ is a distinguished parameter in $X$. For all $\epsilon\in X$ let $\mathcal F_{\epsilon}\subset \mathcal H(M,M'_\epsilon)$ be an open subset of maps. We say that $\mathcal F = (\mathcal F_{\epsilon})_{\epsilon\in X}$ satisfies the \emph{jet parametrization property of order $\tz\in \mathbb N$} if the following holds.

\

\noindent{\bfseries JPP:}\emph{  There exists an open neighborhood $V$ of $p$ in $\C^N$, an open neighborhood $W$ of $\epsilon_0$ in $X$, a finite index set $J$, real-analytic functions $q_j:W\times J_{p}^{\tz}\to \mathbb R, j \in J$ such that $q_j(\epsilon,\Lambda)$ is polynomial in $\Lambda$, and a holomorphic map $\Phi_j:\mathcal U_j\to \C^{N'}$ (where $\mathcal U_j=V\times U_j$ and $U_j=\{q_j(\epsilon,\Lambda)\neq 0\}\subset W\times J_{p}^{\tz}$) of the form
\begin{align} \label{rationalJetParam}
\Phi_j(Z,\epsilon,\Lambda) =  \sum_{\alpha\in \N_0^N} \frac{p_j^{\alpha} (\epsilon,\Lambda)}{q_j(\epsilon, \Lambda)^{d^j_\alpha}} Z^\alpha, \quad  p_j^\alpha \in \C\{\epsilon\}[\Lambda], \quad d^j_\alpha\in\N_0, \quad j\in J,
\end{align}
such that for every map $t\mapsto(\epsilon(t),H(t))$ belonging to $\parcur{r}_{(\epsilon(0),H(0))}$ there exists $j\in J$ such that the following holds for all $t$ close enough to $0$:
\begin{itemize}
\item[(a)] $(\epsilon(t),j_{p}^{\tz} H(t)) \in U_j$,
\item[(b)] $H(Z,t)|_{V} = \Phi_j(Z,\epsilon(t), j_{p}^{\tz} H(t)) + O(t^{r+1}).$
%where $V_{k}$ is the projection of $\mathcal U_{k}$ to the $\mathbb C^N$ factor.
\end{itemize}}
\emph{
In particular,  there exist real-analytic functions $c_i^j:W\times J_{p}^{\tz}\to \mathbb R$, $i\in\N$, polynomial in $\Lambda$ such that
\begin{align}
\label{defEquationJetParam}
 A_\epsilon \coloneqq j_{p}^{\tz} (\mathcal F_\epsilon) =  \bigcup_{j\in J}\{ \Lambda\in J_{p}^{\tz} \colon q_j(\epsilon,\Lambda) \neq 0, \, c^j_i (\epsilon, \Lambda, \bar \Lambda) =0 \}.
 \end{align}
Define $A\subset X\times J_p^{\tz}$ as $A \coloneqq \bigcup_\epsilon (\{\epsilon\}\times A_\epsilon)$ and set $A_j = A \cap \mathcal{U}_j$. Then $\Lambda$ is the $k_0$-jet of a map $\mathcal F_{\epsilon} \ni H\colon (M,p) \rightarrow (M'_{\epsilon},p')$ if and only if $(\epsilon,\Lambda) \in A$.
\\
Furthermore  for any $(\epsilon(t),H(t))\in \parcur{r}_{(\epsilon,H)}$ with $\Lambda(t) = j_{p_k}^{\tz} H(t)$ we have for small enough $t$:
\begin{align}\label{defEquationJetParamcurves}
c^{j}_i (\epsilon(t),\Lambda(t), \bar{\Lambda}(t)) = O(t^{r+1}), \qquad i,j \in \N.
\end{align}
}
\end{definition}

%Remarks from later on ... 

\begin{remark}
\label{rem:trivial0}
Since $\mathcal F \subset \bigcup_{\epsilon \in X} \mathcal H(M,M'_\epsilon)\subset X \times \mathcal{H} (M,\CNp)$ we can equip $\mathcal F$ with the induced topology. Similarly as in \cite{dSLR15a} one can show that $\Phi_j: A_j \rightarrow \mathcal F$ is locally a homeomorphism. For more details we refer to \cite[Lemma 19]{dSLR15a}. 
\end{remark}

\begin{remark}
\label{rem:trivial1}
Let $(\epsilon,\Lambda) \in A^{\rm reg}_j$ and $w\in T_{(\epsilon,\Lambda)} A^{\rm reg}_j$ and consider a curve $c(t)=(\epsilon(t),\Lambda(t))$ in $A^{\rm reg}_j$ with $c(0) = (\epsilon,\Lambda)$ and $c'(0) = w$. Then a similar computation as in \cref{rem:derivDef} applied to $(\epsilon(t),H(t))$ with $H(t) = \Phi_j(.,\epsilon(t),\Lambda(t))$ shows that $D\Phi_j(T_{(\epsilon,\Lambda)} A^{\rm reg}_j) \subseteq \hol(H,\mathfrak{D})$ for any $(\epsilon,\Lambda)\in A^{\rm reg}_j$.
\end{remark}

\begin{remark}
\label{rem:trivial2}
In a similar way as in \cite{dSLR17}*{Remark 18} one can deduce a jet parametrization for $\hol^k(H,\mathfrak{D})$, which for $k=1$ implies that $\hol(H,\mathfrak{D})$ is finite dimensional. As in \cite[Cor. 32]{dSLR15a} one can deduce that $X\times \mathcal F_\epsilon \ni (\epsilon,H) \mapsto \dim(\hol(H,\mathfrak{D}))$ is upper semicontinuous. Applying this fact to a base-point-type deformation and using the last statement of \cref{lem:differentHol} shows that $p \mapsto \dim \hol(H)(p)$ is upper semicontinuous.
\end{remark}

%....end remarks from later on

\subsection{Further results}
\label{sec:further}

The results in the introduction actually hold in a more general setting. In particular \cref{thm:newmain} can be formulated for mappings which satisfy JPP.

\begin{theorem}\label{thm:semi-analytic0}
Let $(M,p) \subset \CN$ and $M'\subset \CNp$ be generic real-analytic submanifolds and assume that $\mathcal F$ satisfies JPP. Then $E_{\mathcal F}$ is locally a semi-analytic subset of $M'$.
\end{theorem}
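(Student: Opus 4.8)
The plan is to use the jet parametrization to realize $E_{\mathcal F}$ as a projection, and then to upgrade ``subanalytic'' to ``semi-analytic'' by exploiting the polynomial dependence of the defining functions on the jet variable.

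\emph{Step 1: Reduction to a projection.} By the last part of JPP (see \eqref{defEquationJetParam} and \cref{def:jetparamdef}), a jet $\Lambda \in J_p^{\tz}$ is realized by some $H \in \mathcal F_\epsilon$ exactly when $(\epsilon,\Lambda) \in A$; hence $\mathcal F_\epsilon \neq \emptyset$ if and only if the fibre $A_\epsilon$ is nonempty, and therefore $E_{\mathcal F} = \pi(A)$, where $\pi \colon X \times J_p^{\tz} \to X$ is the projection onto the first factor. Since $J$ is finite, it suffices by \eqref{defEquationJetParam} to treat each piece $A_j = \{(\epsilon,\Lambda) : q_j(\epsilon,\Lambda) \neq 0,\ c_i^j(\epsilon,\Lambda,\bar\Lambda) = 0 \text{ for all } i\}$ separately and to show that each $\pi(A_j)$ is semi-analytic; then $E_{\mathcal F} = \bigcup_{j\in J} \pi(A_j)$ is semi-analytic as well. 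Note that $A_j$ itself is semi-analytic, being the intersection of the open semi-analytic set $\{q_j \neq 0\}$ with the common zero set of the real-analytic functions $c_i^j$, which is a real-analytic subvariety and hence locally semi-analytic by coherence.

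\emph{Step 2: The structural input and finiteness reduction.} The feature we exploit is that, by JPP, both $q_j$ and all $c_i^j$ are \emph{polynomial} in $\Lambda$ and $\bar\Lambda$, with coefficients that are real-analytic in $\epsilon$. Consequently, for each fixed $\epsilon$ the fibre $A_{j,\epsilon} \subset J_p^{\tz} \cong \R^{2K}$ (with $K = \dim_\C J_p^{\tz}$) is a \emph{semi-algebraic} subset of $\R^{2K}$, cut out by polynomial equations and one polynomial inequation whose coefficients are the values at $\epsilon$ of real-analytic functions. Since semi-analyticity is a local property, I fix a base point $\epsilon^\ast \in X$ and prove that $\pi(A_j)$ is semi-analytic near $\epsilon^\ast$. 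To obtain a family of bounded complexity I first reduce the infinite system $\{c_i^j\}_i$ to a finite one: the ring $\mathcal{O}_{\epsilon^\ast}[\Lambda,\bar\Lambda]$ of polynomials in $\Lambda,\bar\Lambda$ over the Noetherian ring of real-analytic germs at $\epsilon^\ast$ is itself Noetherian, so finitely many $c_{i_1}^j,\dots,c_{i_s}^j$ generate the ideal generated by all of the $c_i^j$. Passing to germs at an arbitrary point of $\{\epsilon^\ast\}\times J_p^{\tz}$ then shows that, after shrinking $W$ to a neighbourhood of $\epsilon^\ast$, the finite subsystem cuts out the same set as the full system, so that $A_j$ coincides with $\{q_j \neq 0,\ c_{i_1}^j = \dots = c_{i_s}^j = 0\}$ there. (Making this reduction uniform over the non-compact jet space is the delicate point, discussed below.)

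\emph{Step 3: Eliminating the jet variable.} Now $\epsilon \in \pi(A_j)$ means precisely that there exists $\Lambda \in \R^{2K}$ with $q_j(\epsilon,\Lambda)\neq 0$ and $c_{i_1}^j(\epsilon,\Lambda,\bar\Lambda)=\dots=c_{i_s}^j(\epsilon,\Lambda,\bar\Lambda)=0$. By the Tarski--Seidenberg theorem (quantifier elimination over the reals), this existential statement is equivalent to a finite Boolean combination of polynomial equalities and inequalities in the coefficients of $q_j$ and the $c_{i_l}^j$ regarded as polynomials in $\Lambda,\bar\Lambda$. As these coefficients are real-analytic functions of $\epsilon$, the resulting condition on $\epsilon$ is a finite Boolean combination of real-analytic equalities and inequalities; hence $\pi(A_j)$ is semi-analytic in a neighbourhood of $\epsilon^\ast$. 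Intersecting with the semi-analytic set $X$ and taking the finite union over $j \in J$ shows that $E_{\mathcal F}$ is locally semi-analytic, and via the base-point identification $X = M' \cap \overline{B_r(p')}$ this exhibits $E_{\mathcal F}$ as a locally semi-analytic subset of $M'$.

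\emph{Main obstacle.} The heart of the matter is that a projection of a semi-analytic set is in general only subanalytic; what makes the stronger semi-analytic conclusion available here is exactly the polynomial dependence on $\Lambda$ furnished by JPP, which lets me discharge the $\Lambda$-quantifier by Tarski--Seidenberg while retaining analytic dependence on $\epsilon$. The most delicate technical point is the uniform-in-$\Lambda$ finiteness reduction of Step 2 over the non-compact jet space $J_p^{\tz}$, where the Noetherianity of the polynomial ring over analytic germs, together with the polynomial structure, is essential.
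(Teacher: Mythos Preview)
Your overall architecture matches the paper's: realize $E_{\mathcal F}$ as the projection $\pi(A)$, reduce the infinite system $\{c_i^j\}_i$ to a finite one by Noetherianity, and then eliminate the polynomial jet variable. Your Step~3 (Tarski--Seidenberg plus substitution of analytic coefficients) is exactly the content of \cref{thm:Loj} (\L ojasiewicz), which is what the paper invokes.

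The gap is in Step~2, and you have slightly misdiagnosed it. You use Noetherianity of the \emph{germ} ring $\mathcal{O}_{\epsilon^\ast}[\Lambda,\bar\Lambda]$, which gives for each $i$ an identity $c_i^j=\sum_l a_l^{(i)} c_{i_l}^j$ whose coefficients $a_l^{(i)}$ are germs at $\epsilon^\ast$. Each such identity therefore holds only on a neighbourhood $U_i$ of $\epsilon^\ast$, and there is no reason that $\bigcap_i U_i$ contains an open set; so you cannot conclude that the finite subsystem cuts out the same zero set as the full system on any $W\times J_p^{\tz}$. The difficulty is thus uniformity in $\epsilon$ across the infinitely many $c_i^j$, not the non-compactness of the $\Lambda$-space (polynomial dependence already handles the latter). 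The paper avoids this by invoking Frisch's theorem \cite{Frisch67}: for a bounded semi-analytic $B$, the ring $C^\omega(\overline B)$ is Noetherian, hence so is $C^\omega(\overline B)[\Lambda,\bar\Lambda]$, and finitely many $c_{i_1}^j,\dots,c_{i_s}^j$ generate the ideal of \emph{actual functions} on $\overline B\times J_p^{\tz}$. This gives the needed equality of zero sets over all of $\overline B$, after which \cref{thm:Loj} applies directly. If you replace your germ-level Noetherianity by Frisch's result, your argument becomes complete and coincides with the paper's.
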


\cref{thm:newmain} now, follows from \cref{thm:semi-analytic0} and \cref{jetparamdef}, which shows that the class of $\ell$-finitely nondegenerate maps satisfies JPP. Note that \cref{thm:semi-analytic0} can be regarded as a result for base-point type deformations and hence can be considered as a special case of the following more general result:

\begin{theorem}\label{thm:semi-analytic}
Let $(M,p)$, $(M'_\epsilon,p')$ and $\mathcal F$ be as in \cref{def:jetparamdef}. Then $E_{\mathcal F}$ is locally a semi-analytic subset of $X$.
\end{theorem}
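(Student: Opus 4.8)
The plan is to realize $E_{\mathcal F}$ as a projection of the set $A$ furnished by JPP, and then to convert this projection into a semi-analytic description by exploiting that $A$ is cut out by conditions that are \emph{polynomial} in the jet variable $\Lambda$ with coefficients depending real-analytically on $\epsilon$.

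First I would record the elementary reduction. By the characterization at the end of \cref{def:jetparamdef}, a parameter $\epsilon$ belongs to $E_{\mathcal F}$ precisely when $\mathcal F_\epsilon\neq\emptyset$, i.e. when the fibre $A_\epsilon=\{\Lambda:(\epsilon,\Lambda)\in A\}$ is nonempty; thus $E_{\mathcal F}$ is the image of $A$ under the projection $\pi$ onto the $X$-factor. Since $J$ is finite and $A_\epsilon=\bigcup_{j\in J}\{\Lambda:q_j(\epsilon,\Lambda)\neq 0,\ c^j_i(\epsilon,\Lambda,\bar\Lambda)=0\ \forall i\}$, it suffices to show that each $E_j:=\{\epsilon:\exists\Lambda,\ q_j(\epsilon,\Lambda)\neq0,\ c^j_i(\epsilon,\Lambda,\bar\Lambda)=0\ \forall i\in\N\}$ is locally semi-analytic, as a finite union of semi-analytic sets is again semi-analytic.

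The first essential point is to replace the infinite family $(c^j_i)_{i\in\N}$ by a finite subsystem. Fixing a point $\epsilon_*\in X$ and regarding each $c^j_i$ as a polynomial in the real and imaginary parts of $\Lambda$ with coefficients in the Noetherian ring $R=\mathcal C^\omega_{\epsilon_*}$ of germs of real-analytic functions at $\epsilon_*$, the Hilbert basis theorem shows that the ideal they generate in the (again Noetherian) polynomial ring $R[\Lambda,\bar\Lambda]$ is generated by finitely many of the $c^j_i$, say $c^j_{i_1},\dots,c^j_{i_s}$. The second essential point, which upgrades the conclusion from subanalytic to semi-analytic, is the elimination of the quantifier $\exists\Lambda$ by a Tarski--Seidenberg argument with analytic parameters: introducing the finitely many coefficients $a_1(\epsilon),\dots,a_L(\epsilon)$ of $q_j,c^j_{i_1},\dots,c^j_{i_s}$ (as polynomials in $\Lambda,\bar\Lambda$) as independent real variables, the corresponding set in $\R^L\times J_p^{\tz}\cong\R^L\times\R^{2K}$ (where $K=\dim_\C J_p^{\tz}$) is semi-algebraic, so its projection to $\R^L$ is semi-algebraic and hence defined by a Boolean combination $\Psi(a)$ of polynomial sign conditions; substituting $a=a(\epsilon)$ and using that each $a_1,\dots,a_L$ is real-analytic in $\epsilon$ shows that $E_j$ agrees near $\epsilon_*$ with the semi-analytic set $\{\epsilon:\Psi(a(\epsilon))\}$. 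Intersecting with the semi-analytic set $X$ and taking the finite union over $j\in J$ then gives that $E_{\mathcal F}$ is semi-analytic near $\epsilon_*$, which is the desired local statement; this is the same mechanism used for the equidimensional equivalence locus in \cite{dSJL15}.

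The step I expect to be the main obstacle is the passage from infinitely many to finitely many defining equations \emph{uniformly over the noncompact fibre} $J_p^{\tz}$. The Hilbert basis theorem yields, for each $i$, an identity $c^j_i=\sum_{\nu}h_{i\nu}c^j_{i_\nu}$ in $R[\Lambda,\bar\Lambda]$, but a priori each such identity holds only on its own neighbourhood of $\epsilon_*$, and one must rule out that these neighbourhoods shrink to $\epsilon_*$ as $i\to\infty$, which would allow the fibres $A_\epsilon$ for $\epsilon$ near $\epsilon_*$ to differ from the zero set of the finite subsystem. Here the hypothesis that the $c^j_i$ are polynomial in $\Lambda$ is indispensable, since it lets one work in $R[\Lambda,\bar\Lambda]$ rather than localising in $\Lambda$; I would resolve the uniformity by combining the coherence of the real-analytic structure sheaf with this fibrewise-polynomial dependence, so that the common zero set of the full ideal and that of the finite subsystem coincide on a full neighbourhood of $\epsilon_*$ in $X\times J_p^{\tz}$.
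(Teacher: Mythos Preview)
Your overall approach is the same as the paper's: reduce to finitely many defining equations by a Noetherianity argument, then eliminate the jet variable by a Tarski--Seidenberg/\L ojasiewicz projection. The elimination step you spell out (extract the finitely many real-analytic coefficients $a_1(\epsilon),\dots,a_L(\epsilon)$, project the resulting semi-algebraic set in $\R^L\times\R^{2K}$, substitute back) is exactly the content of \L ojasiewicz's theorem as stated in \cref{thm:Loj}, so that part is fine and matches the paper.

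The difference, and the place where your argument is not yet complete, is the Noetherian step. The paper does \emph{not} work in the local ring of germs $\mathcal C^\omega_{\epsilon_*}$; it works in $C^\omega(\overline B)$ for a compact semi-analytic neighbourhood $\overline B$, invoking Frisch's theorem \cite{Frisch67} that this ring is Noetherian. This sidesteps precisely the uniformity problem you flag: since the finitely many generators $c^j_{i_1},\dots,c^j_{i_s}$ generate the ideal in $C^\omega(\overline B)[\Lambda,\bar\Lambda]$, the identity $c^j_i=\sum_\nu h_{i\nu}c^j_{i_\nu}$ holds with $h_{i\nu}\in C^\omega(\overline B)[\Lambda,\bar\Lambda]$, hence on all of $\overline B\times J_p^{\tz}$ at once. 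Your proposed fix via ``coherence of the real-analytic structure sheaf'' is pointing in the right direction---Frisch's theorem is closely tied to coherence---but coherence alone does not obviously control an ideal generated by \emph{infinitely} many global sections; what you actually need is Noetherianity of the ring of sections over a compact set, which is Frisch's theorem. Replacing your germ ring by $C^\omega(\overline B)$ and citing Frisch closes the gap and makes your argument coincide with the paper's.
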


\cref{thm:semi-analytic} is a partial generalization of \cite{dSJL15}*{Theorem 2}, its proof is given in \cref{sec:propLocus} below.

The parametrization method can also be used to provide a sufficient linear criterion to show that a given map is isolated in $\mathcal F$. The following theorem is a generalization of \cref{thm:infdefmaplocus} from the introduction:

\begin{theorem}
\label{thm:trivial}
Let $M, \mathfrak{D}$ and $\mathcal F$ be as in JPP. Fix $H: M \rightarrow M'_{\epsilon_0}$ with $H\in \mathcal F_{\epsilon_0}$. Suppose that $\dim \hol(H,\mathfrak{D}) = 0$, then $H$ is isolated in $\mathcal F$. 
\end{theorem}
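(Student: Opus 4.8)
The plan is to prove the contrapositive-flavored statement directly: assuming $\dim \hol(H,\mathfrak{D}) = 0$, I want to show that $H$ admits a neighborhood in $\mathcal F$ containing no other element. The natural tool is the jet parametrization itself, since JPP tells us (via \cref{defEquationJetParam}) that the $\tz$-jets of maps in $\mathcal F$ are cut out by the real-analytic conditions $q_j(\epsilon,\Lambda) \neq 0$, $c^j_i(\epsilon,\Lambda,\bar\Lambda) = 0$, and \cref{rem:trivial0} says that the parametrization map $\Phi_j$ is locally a homeomorphism from $A_j$ onto $\mathcal F$. Thus isolatedness of $H$ in $\mathcal F$ is equivalent to isolatedness of the corresponding point $(\epsilon_0,\Lambda_0) = (\epsilon_0, j_p^{\tz} H)$ in the set $A$. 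I would therefore reduce the entire question to a statement about the jet variety $A$ near $(\epsilon_0,\Lambda_0)$.

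First I would fix the index $j \in J$ with $(\epsilon_0,\Lambda_0) \in U_j$, so that locally $A$ coincides with $A_j$, and record that $A_j$ is the zero set (inside the open set $U_j$) of the real-analytic functions $c^j_i$. The key step is to connect the hypothesis $\dim\hol(H,\mathfrak{D}) = 0$ to the local structure of $A_j$ at $(\epsilon_0,\Lambda_0)$. By \cref{rem:trivial2}, $\hol(\cdot,\mathfrak{D})$ admits its own jet parametrization, and more importantly the tangent/derivative relationship from \cref{rem:trivial1} gives $D\Phi_j\big(T_{(\epsilon,\Lambda)} A_j^{\mathrm{reg}}\big) \subseteq \hol(H,\mathfrak{D})$ at regular points. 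I would argue that if $(\epsilon_0,\Lambda_0)$ were not isolated in $A_j$, then through it (or arbitrarily near it) there would pass a nontrivial real-analytic arc in $A_j$; differentiating a curve $(\epsilon(t),\Lambda(t))$ along such an arc and passing through $\Phi_j$ would, by \cref{rem:derivDef} applied to $H(t) = \Phi_j(\cdot,\epsilon(t),\Lambda(t))$, produce a nonzero element of $\hol(H,\mathfrak{D})$, contradicting $\dim\hol(H,\mathfrak{D}) = 0$. To make "arc through the point" rigorous I would invoke the curve selection lemma for real-analytic (semi-analytic) sets: if $(\epsilon_0,\Lambda_0)$ is an accumulation point of $A_j \setminus \{(\epsilon_0,\Lambda_0)\}$, there is a real-analytic curve $\gamma(t)$ in $A_j$ with $\gamma(0) = (\epsilon_0,\Lambda_0)$ and $\gamma(t) \neq \gamma(0)$ for $t \neq 0$.

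The main obstacle I anticipate is ensuring that the tangent vector $\gamma'(0)$ produced this way is genuinely \emph{nonzero} and genuinely lands in $\hol(H,\mathfrak{D})$ as a \emph{nontrivial} element. The curve selection lemma only guarantees $\gamma$ is nonconstant, not that its derivative at $0$ is nonzero — a real-analytic curve can have $\gamma'(0) = 0$ while being nonconstant. I would handle this by reparametrizing: write $\gamma(t) - \gamma(0) = t^m w + o(t^m)$ for the smallest $m \geq 1$ with nonzero coefficient $w$, and either reparametrize by $s = t^{m}$ (legitimate since the relevant tangency/deformation conditions are stable under such reparametrization, and the higher-order deformation framework $\hol^k$ of \cref{def:higherorderinfdef} accommodates exactly this) or directly feed the leading-order data into the order-$k$ infinitesimal deformation equations. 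Concretely, \cref{defEquationJetParamcurves} guarantees $c^j_i(\epsilon(t),\Lambda(t),\bar\Lambda(t)) = O(t^{r+1})$ along such curves, which is precisely the compatibility needed to extract a nonzero infinitesimal deformation from the leading Taylor coefficient.

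Finally I would assemble the pieces: the nonzero leading coefficient $w = \gamma'(0)$ (after reparametrization) satisfies $D\Phi_j(w) \in \hol(H,\mathfrak{D})$ by \cref{rem:trivial1}, and since $\Phi_j$ is a local homeomorphism with injective-enough derivative on $A_j^{\mathrm{reg}}$, the map $w \mapsto D\Phi_j(w)$ does not kill the jet-direction, so $D\Phi_j(w) \neq 0$. This contradicts $\hol(H,\mathfrak{D}) = \{0\}$. Hence no such accumulating arc exists, $(\epsilon_0,\Lambda_0)$ is isolated in $A$, and by the local homeomorphism property of \cref{rem:trivial0} the map $H$ is isolated in $\mathcal F$, completing the proof.
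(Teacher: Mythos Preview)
Your approach is correct in spirit and genuinely different from the paper's. Both arguments reduce, via \cref{rem:trivial0}, to showing that $(\epsilon_0,\Lambda_0)=(\epsilon_0,j_p^{\tz}H)$ is isolated in $A_j$. The paper does this through \cref{lem:dimEll}: it first invokes the upper semicontinuity of $\dim\hol(\Phi_j(\cdot),\mathfrak D)$ (\cref{rem:trivial2}) to find a neighbourhood $U$ on which this dimension stays $0$, then argues that any positive-dimensional submanifold $N\subset A_j\cap U$ would, at a \emph{generic nearby} point where the rank theorem makes $D\Phi_j$ injective on $T N$, inject into $\hol$ by \cref{rem:trivial1}, forcing $\dim N\le 0$. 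Your route via the curve selection lemma avoids the passage to nearby regular points and the rank theorem entirely, working directly at $(\epsilon_0,\Lambda_0)$; it is more elementary and well suited to the $\ell=0$ case, whereas the paper's argument simultaneously yields the general dimension bound $\dim N\le\ell$.

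There is one imprecision worth fixing. In your final paragraph you take $w$ to be the leading Taylor coefficient of $\gamma$ and then want $D\Phi_j(w)\neq 0$, appealing to ``injective-enough derivative on $A_j^{\mathrm{reg}}$''. Neither ingredient is available at $(\epsilon_0,\Lambda_0)$: that point may be singular (so \cref{rem:trivial1} does not apply there), and $D\Phi_j$ need not be injective at that specific point, so $w$ could lie in its kernel. The clean fix is to bypass $\gamma$'s leading coefficient and work with the image curve $t\mapsto(\epsilon(t),H(t))=\Phi_j(\gamma(t))$ directly. Since $\Phi_j$ is a local homeomorphism (\cref{rem:trivial0}) and $\gamma$ is nonconstant, this image curve is nonconstant and real-analytic, hence has a first nonzero Taylor coefficient $(v_m,Y_m)$ at some order $m\ge 1$. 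Because $(\epsilon(t),H(t))$ is an actual curve of maps in $\mathcal F$, the mapping equation $\varrho(H(t),\overline{H(t)},\epsilon(t))\equiv 0$ holds identically in $t$; extracting the $t^m$-coefficient (all lower ones vanish) gives exactly the $\hol$-equation for $(v_m,Y_m)$, and $Y_m(p)=0$ since $H(t)(p)\equiv p'$. Thus $(v_m,Y_m)\in\hol(H,\mathfrak D)\setminus\{0\}$, contradicting the hypothesis. This is essentially what your \cref{rem:derivDef} invocation was aiming at, and it makes both the ``reparametrize by $s=t^m$'' and the $\hol^k$ detour unnecessary.
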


The following corollary is an immediate consequence of \cref{thm:trivial} and \cref{cor:differentHol}.

\begin{corollary}
If $\dim \hol(H) = 0$, then $H$ is isolated in $\mathcal H(M,M')$.
\end{corollary}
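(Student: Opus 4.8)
The plan is to obtain the corollary from \cref{thm:trivial} by taking $\mathfrak D$ to be the base-point-type deformation of $M'$ attached to $H$. Concretely, I would set $\epsilon_0 = p' = H(p)$, choose $r>0$ small, put $X = M' \cap \overline{B_r(p')}$ and $M'_{q'} = M' + p' - q'$ as in \cref{def:deformation}, and let $\mathcal F_\epsilon$ be the class of finitely nondegenerate maps, which satisfies JPP; throughout I assume, as is implicit in the statement, that $H$ itself lies in $\mathcal F_{\epsilon_0}$. This puts the hypotheses of \cref{thm:trivial} in force.

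The first substantive step is to transport the nondegeneracy hypothesis from $\hol(H)$ to $\hol(H,\mathfrak D)$. By \cref{lem:differentHol}, for this deformation the assignment $(v,Y)\mapsto Y+v$ is an isomorphism from $\hol(H,\mathfrak D)$ onto $\hol(H)$ (and \cref{cor:differentHol} records the compatible statement on the first projection). Hence $\dim_\R \hol(H) = 0$ is equivalent to $\dim \hol(H,\mathfrak D) = 0$, and \cref{thm:trivial} immediately gives that $H$, viewed as the point $(\epsilon_0, H) \in \mathcal F$, is isolated in $\mathcal F$.

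It remains to upgrade ``isolated in $\mathcal F$'' to ``isolated in $\mathcal H(M,M')$'', and this is the only step that needs care. The bridge is the shift $\Psi\colon \tilde G \mapsto \bigl(\tilde G(p),\, \tilde G + p' - \tilde G(p)\bigr)$, with inverse $(\epsilon, G)\mapsto G + \epsilon - p'$: one checks directly that, since $p \in M$ forces $\tilde G(p) \in M'$, a map $\tilde G \in \mathcal H(M,M')$ near $H$ corresponds bijectively to a point $(\epsilon, G) \in \mathcal F$ with $\epsilon = \tilde G(p) \in X$ and $G \in \mathcal H((M,p),(M'_\epsilon,p'))$, and that $\Psi$ is a homeomorphism onto its image for the inductive-limit topology (continuity of evaluation at $p$ and of translation by a constant). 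The main obstacle is precisely to verify that this correspondence is a local homeomorphism and that it exhausts a full neighborhood of $H$: here one uses that finite nondegeneracy is an open condition, so that every map of $\mathcal H(M,M')$ close enough to $H$ automatically lies in $\mathcal F$ and no degenerate or differently-based maps accumulate at $H$. Granting this, $\Psi$ identifies a neighborhood of $H$ in $\mathcal H(M,M')$ with a neighborhood of $(\epsilon_0,H)$ in $\mathcal F$, whence the isolation of $(\epsilon_0,H)$ in $\mathcal F$ forces $H$ to be isolated in $\mathcal H(M,M')$.
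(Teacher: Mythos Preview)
Your argument is correct and follows the same route the paper indicates: apply \cref{thm:trivial} to the base-point-type deformation, using the isomorphism $\hol(H,\mathfrak D)\cong\hol(H)$ from \cref{lem:differentHol} (the paper cites its \cref{cor:differentHol}, but that is derived from the same lemma). The paper dismisses the passage from ``isolated in $\mathcal F$'' to ``isolated in $\mathcal H(M,M')$'' as immediate; your shift map $\Psi$ and the observation that finite nondegeneracy is open make this step explicit, which is a fair elaboration rather than a different method.
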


The result can be proved as a consequence of \cref{rem:trivial1,rem:trivial2} in an analogous way as in \cite{dSLR15a}. In the following we are only outlining the main steps and refer to \cite[Lemma 23]{dSLR15a}. We need the following Lemma:

\begin{lemma}
\label{lem:dimEll}
Let $(\epsilon_0,\Lambda_0)\in A_j$, and suppose that $\dim \hol(\Phi_j(\epsilon_0,\Lambda_0),\mathfrak D) = \ell$. Then there exists a neighborhood $U$ of $(\epsilon_0,\Lambda_0) \in X \times  J_p^{\tz}$ such that, if $N\subset A_j$ is a submanifold with $N\cap U\neq \emptyset$, then $\dim N \leq \ell$.
\end{lemma}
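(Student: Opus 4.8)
The plan is to bound $\dim N$ by producing, at a suitable point of $N$, an injective linear map from the tangent space of $N$ into a space of infinitesimal deformations, and then to control the dimension of the latter by upper semicontinuity. First I would invoke \cref{rem:trivial2}: the function $(\epsilon,H)\mapsto \dim\hol(H,\mathfrak D)$ is upper semicontinuous and takes the value $\ell$ at $(\epsilon_0,\Phi_j(\epsilon_0,\Lambda_0))$, so, since $\Phi_j$ is continuous, there is an open neighborhood $U$ of $(\epsilon_0,\Lambda_0)$ in $X\times J_p^{\tz}$ with $\dim\hol(\Phi_j(\epsilon,\Lambda),\mathfrak D)\le \ell$ for every $(\epsilon,\Lambda)\in U\cap A_j$. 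This $U$ will be the neighborhood claimed in the statement.

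Given a submanifold $N\subset A_j$ with $N\cap U\neq\emptyset$, I would fix a point $(\epsilon_1,\Lambda_1)\in N\cap U$ and differentiate $\Phi_j$ along $N$ there. For $w\in T_{(\epsilon_1,\Lambda_1)}N$, choose a curve $c(t)=(\epsilon(t),\Lambda(t))$ in $N$ with $c(0)=(\epsilon_1,\Lambda_1)$ and $c'(0)=w$. Because $c(t)\in N\subset A_j$, the last statement of JPP guarantees that $H(t)\coloneqq\Phi_j(\cdot,c(t))$ lies in $\mathcal F_{\epsilon(t)}$, i.e.\ $H(t)$ maps $(M,p)$ into $(M'_{\epsilon(t)},p')$ \emph{exactly}, not merely to finite order as for a general element of $\parcur{r}$. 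Hence $(\epsilon(t),H(t))$ is an honest deformation curve, and the computation of \cref{rem:derivDef} (as already used in \cref{rem:trivial1}) shows that $\iota(w)\coloneqq(v,D\Phi_j\cdot w)$, with $v$ the component of $w$ in $T_{\epsilon_1}X$, belongs to $\hol(\Phi_j(\epsilon_1,\Lambda_1),\mathfrak D)$. This yields a linear map $\iota\colon T_{(\epsilon_1,\Lambda_1)}N\to\hol(\Phi_j(\epsilon_1,\Lambda_1),\mathfrak D)$.

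The crux is the injectivity of $\iota$, and here I would exploit the jet-reproduction identity built into the parametrization: for $(\epsilon,\Lambda)\in A_j$ one has $j_p^{\tz}\Phi_j(\cdot,\epsilon,\Lambda)=\Lambda$ (apply (b) of JPP to the constant curve $t\mapsto(\epsilon,\Lambda)$). Since $j_p^{\tz}$ is a continuous linear operator, applying it to $D\Phi_j\cdot w=\frac{d}{dt}\big|_{t=0}\Phi_j(\cdot,c(t))$ returns $\frac{d}{dt}\big|_{t=0}\Lambda(t)$, which is exactly the $J_p^{\tz}$-component of $w$; together with $v$ this recovers $w$. Thus $\iota(w)=0$ forces $w=0$, so $\iota$ is injective and $\dim N=\dim T_{(\epsilon_1,\Lambda_1)}N\le\dim\hol(\Phi_j(\epsilon_1,\Lambda_1),\mathfrak D)\le\ell$ by the choice of $U$.

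I expect the main obstacle to be the bookkeeping surrounding injectivity rather than any deep new idea: one must verify that the curves $H(t)$ are genuine deformation curves (so that \cref{rem:derivDef} applies verbatim), justify interchanging $j_p^{\tz}$ with the $t$-derivative, and confirm the reproduction identity $j_p^{\tz}\Phi_j(\cdot,\epsilon,\Lambda)=\Lambda$ on all of $A_j$, not merely at regular points. Once these points are settled the argument runs parallel to \cite[Lemma 23]{dSLR15a}.
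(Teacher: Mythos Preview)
Your proof is correct and follows the same overall architecture as the paper's: first fix the neighborhood $U$ by upper semicontinuity of $(\epsilon,H)\mapsto\dim\hol(H,\mathfrak D)$ (\cref{rem:trivial2}), then bound $\dim N$ by showing that the differential of $\Phi_j$ injects $T_{(\epsilon,\Lambda)}N$ into $\hol(\Phi_j(\epsilon,\Lambda),\mathfrak D)$ (\cref{rem:trivial1}).

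The difference lies in how injectivity is obtained. The paper does not argue injectivity at the chosen point $(\epsilon_1,\Lambda_1)$ directly; instead it invokes the rank theorem for Banach spaces (as in \cite[Lemma 23]{dSLR15a}) to conclude that $D\Phi_j$ is injective on $T_{(\epsilon,\Lambda)}N$ for $(\epsilon,\Lambda)$ in an open dense subset of $N$, and then picks such a point inside $N\cap U$. Your argument bypasses this entirely by exploiting the jet-reproduction identity $j_p^{\tz}\Phi_j(\cdot,\epsilon,\Lambda)=\Lambda$ on $A_j$: composing the derivative with the (linear, continuous) jet map $j_p^{\tz}$ recovers the $\Lambda$-component of $w$, while the $\epsilon$-component is recovered tautologically, so $\iota$ is injective at \emph{every} point of $N$. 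This is more elementary---it avoids the Banach-space rank theorem altogether---and gives a marginally sharper statement (no passage to a generic point is needed). The price is the bookkeeping you flag at the end, all of which is routine: the reproduction identity follows from (b) of JPP applied to the constant curve, the interchange of $j_p^{\tz}$ with $\tfrac{d}{dt}$ is legitimate since $j_p^{\tz}$ is a finite collection of Taylor coefficients, and the curves $H(t)=\Phi_j(\cdot,c(t))$ are honest deformations because $c(t)\in A_j$ forces $H(t)\in\mathcal F_{\epsilon(t)}$.
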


\begin{proof}
Let $U$ be a neighborhood of $(\epsilon_0,\Lambda_0)$ such that $\dim \hol(\Phi_j(\epsilon, \Lambda)) \leq \ell$ for all $(\epsilon,\Lambda) \in U$. The existence of $U$ is guaranteed by the upper semicontinuity property given in \cref{rem:trivial2} and the continuity of $\Phi_j$. Let $N$ be a submanifold of $A_j$ intersecting $U$. Using the rank theorem for Banach spaces as in \cite[Lemma 23]{dSLR15a} we conclude that $D\Phi_j(\epsilon,\Lambda)$ is injective on $T_{(\epsilon,\Lambda)}N$ for $(\epsilon,\Lambda)$ belonging to a dense open set in $N$.
By \cref{rem:trivial1} we obtain the following inequalities:
\begin{align*}
\dim N = \dim D\Phi_j(\epsilon,\Lambda)(T_{(\epsilon,\Lambda)} N) \leq \dim \hol(\Phi_j(\epsilon,\Lambda)) \leq \ell,
\end{align*}
which concludes the proof.
\end{proof}

\begin{proof}[Proof of \cref{thm:trivial}]
Define $\Lambda_0 = j_p^{\tz} H$. There exists $j\in J$ such that $(\epsilon_0,\Lambda_0) \in A_j$. By \cref{lem:dimEll} there exists a neighborhood $U$ of $(\epsilon_0, \Lambda_0)$ such that for any submanifold $N$ in $U\cap A_j$ it holds that $\dim N = 0$. Then the dimension of $U\cap A_j$ is zero, and thus $U\cap A_j$ consists of isolated points. By \cref{rem:trivial0} the proof is concluded.
\end{proof}

% subsection jet_parametrization (end)

\subsection{CR geometry}\label{sec:CRgeometry}
In this subsection we briefly introduce some standard notation from CR geometry; more details can be found e.g.\ in \cite{BERbook}.
Let $M$ be a generic real-analytic CR submanifold of $\C^N$. It is well known (see \cite{BERbook}) that one can choose \textit{normal coordinates} $(z,w)\in \mathbb C^n\times \mathbb C^d=\mathbb C^N$ in such a way that $M$ is written as
\begin{align*}
w = Q(z,\bar z, \bar w), \quad \mathrm{(or~equivalently:} \ \bar w = \overline Q(\bar z,z, w) {\rm )},
\end{align*}
where $Q$ is a germ of a holomorphic map $Q:\mathbb C^{2n+d}\to\mathbb C^{d}$ satisfying $Q(z,0,\bar w) \equiv Q(0,\bar z,\bar w) \equiv \bar w$ and $Q(z, \bar z, \overline Q (\bar z, z, w)) \equiv w$.

In the proof of the parametrization results, the notion of \emph{Segre maps} is also needed. For $j\in \N$ let $(x_1,\ldots, x_j)$ be coordinates of $\mathbb C^{nj}$ ($x_\ell\in \mathbb C^n$ for $\ell = 1, \ldots, j$); we also write $x^{[j;k]} := (x_j,\ldots,x_k)$. The \textit{Segre map} of order $q\in \N$ is the map $S^q_0:\mathbb C^{nq}\to \mathbb C^N$ defined as follows:
\begin{align*}
S^1_0(x_1) := (x_1,0), \quad S^q_0\bigl(x^{[1;q]}\bigr) := \left (x_1, Q\left(x_1,\overline S^{q-1}_0\bigl(x^{[2;q]}\bigr) \right)\right).
\end{align*}
We say that  $M$ is \textit{minimal at $p \in M$} if it does not contain any germ of a CR submanifold $\widetilde M \subsetneq M$ of $\C^N$ through $p$ having the same CR dimension as $M$ at $p$.
The minimality criterion obtained in \cite{BER96} states that if $M$ is minimal at $0$, then $S_0^q$ is generically of full rank for sufficiently large $q$, and moreover, in this case, for every neighbourhood
$U \subset \C^{2qn} $ there exists $x^0 \in U$ sucht that
 $S_0^{2q} (0) = 0$ and such that $S_0^{2q}$ is of 
 full rank at $x^0$ (see e.g. \cite{BER99}).

\subsection{Real-analytic geometry}
\label{sec:realGeometry}
In order to prove our theorems we work within the framework of 
subanalytic and semi-analytic sets; to this end, we 
recall some basic notions and results.

A set $A\subset \mathbb R^n$ is called \emph{semi-analytic} if it is a finite union of intersections of sets defined by
 real-analytic equations and inequalities:
\begin{align}\label{eq:semi}
A = \bigcup_{i=1}^k \bigcap_{j=1}^{N(i)} A_{ij},
\end{align}
where $A_{ij}$ is either of the form $\{h_{ij}=0\}$ or $\{h_{ij}>0\}$ for some real-analytic $h_{ij}\in \mathbb R\{x_1,\ldots,x_n\}$.  The notion of semi-analytic set is modeled on the notion of semi-algebraic sets, which are defined in a similar way (with polynomial functions instead of analytic ones) and are closed under projections (Tarski-Seidenberg theorem). On the other hand, semi-analytic sets do not enjoy the same property, and therefore we will need some more subtle results.

If $\mathcal R$ is any ring of real functions over a set $E$, a subset $A\subset E$ is called \emph{definable over $\mathcal R$} if it can be expressed as \eqref{eq:semi}, with $A_{ij}$ being  either of the form $\{f_{ij}=0\}$ or $\{f_{ij}>0\}$ for some $f_{ij}\in \mathcal R$. We have the following (see e.g. \cite{BM88}):

\begin{theorem}[\textbf{{\L}ojasiewicz}]
\label{thm:Loj}
Let $X$ be an analytic manifold, let $A\subset X\times \mathbb R^k$ be definable over the ring $C^\omega(X)[x_1,\ldots,x_k]$ and let $\pi:X\times \mathbb R^k\to X$ be the projection on the first factor. Then $\pi(A)$ is semi-analytic, i.e., definable over $C^\omega(X)$.
\end{theorem}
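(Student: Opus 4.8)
The plan is to deduce this statement from the classical Tarski--Seidenberg theorem by promoting the finitely many analytic coefficient functions that describe $A$ to genuine coordinates of an auxiliary finite-dimensional space; once this is done the ``parametric'' character of the statement disappears and only the ordinary projection theorem for semi-algebraic sets is needed.

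First I would put $A$ into the normal form provided by its definability over $C^\omega(X)[x_1,\dots,x_k]$, namely $A=\bigcup_{i}\bigcap_{j}A_{ij}$ with each $A_{ij}$ of the form $\{f_{ij}=0\}$ or $\{f_{ij}>0\}$, where $f_{ij}\in C^\omega(X)[x_1,\dots,x_k]$. Writing $f_{ij}(u,x)=\sum_{\beta}a_{ij,\beta}(u)\,x^{\beta}$ over a finite set of multi-indices $\beta$, I collect the finitely many distinct coefficient functions $a_{ij,\beta}\in C^\omega(X)$ into a single tuple $g=(g_1,\dots,g_m)\colon X\to\R^m$; the fact that only finitely many of them occur is exactly what keeps $m$ finite, and uses that $A$ is described by finitely many polynomials of bounded degree in the fiber variables.

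Next I would introduce independent real variables $c=(c_1,\dots,c_m)$ and replace each $g_l$ by $c_l$, obtaining genuine polynomials $F_{ij}\in\R[c_1,\dots,c_m][x_1,\dots,x_k]$ with $f_{ij}(u,x)=F_{ij}(g(u),x)$. These define an honest semi-algebraic set $\tilde A\subset\R^m\times\R^k$, and by construction $(u,x)\in A$ if and only if $(g(u),x)\in\tilde A$, i.e.\ $A=(g\times\id)^{-1}(\tilde A)$. Applying the classical Tarski--Seidenberg theorem to the projection $\R^m\times\R^k\to\R^m$ onto the $c$-coordinates, the image $\tilde B=\{c\colon\exists x,\ (c,x)\in\tilde A\}$ is semi-algebraic, hence a finite Boolean combination of sets $\{P_\alpha=0\}$ and $\{P_\alpha>0\}$ with $P_\alpha\in\R[c_1,\dots,c_m]$. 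From $A=(g\times\id)^{-1}(\tilde A)$ I then read off $\pi(A)=g^{-1}(\tilde B)$, so $\pi(A)$ is the corresponding Boolean combination of the sets $\{P_\alpha(g_1,\dots,g_m)=0\}$ and $\{P_\alpha(g_1,\dots,g_m)>0\}$. Since each $P_\alpha(g_1,\dots,g_m)$ is a polynomial expression in the real-analytic functions $g_1,\dots,g_m$, it again lies in $C^\omega(X)$, and therefore $\pi(A)$ is definable over $C^\omega(X)$, which is the assertion.

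The main obstacle is conceptual rather than computational: recognizing that the right move is to lift the coefficients into a finite-dimensional coordinate space, where the theorem reduces to the plain Tarski--Seidenberg statement and no version of quantifier elimination uniform in parameters is required. The two points that must be checked with care are that only finitely many coefficient functions appear (so that the auxiliary space $\R^m$ is finite-dimensional) and that the substitution $c=g(u)$ genuinely intertwines $A$ with $\tilde A$; the latter is immediate from $f_{ij}=F_{ij}\circ(g\times\id)$.
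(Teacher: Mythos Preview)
Your argument is correct and is essentially the standard proof of this result: lift the finitely many analytic coefficient functions to coordinates in a finite-dimensional $\R^m$, apply Tarski--Seidenberg there, and pull back. The verification $\pi(A)=g^{-1}(\tilde B)$ is clean, and the two delicate points you flag (finiteness of the coefficient set, and that $f_{ij}=F_{ij}\circ(g\times\id)$ intertwines $A$ with $\tilde A$) are exactly the ones that matter.

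Note, however, that the paper does \emph{not} supply its own proof of this theorem: it is quoted as a classical result of {\L}ojasiewicz with a reference to \cite{BM88}, and is used as a black box in the proof of Theorem~\ref{thm:semi-analytic}. So there is no ``paper's proof'' to compare against; your write-up simply fills in the cited argument.
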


The family of semi-analytic sets and maps (which are defined 
as maps whose graphs are semi-analytic sets) is not closed under projections. 
If one wants to work with images of semi-analytic sets, one encounters 
the subanalytic sets; for the basics, we refer the reader to \cite{BM88}.

A (subanalytic) \textit{cell decomposition} of a subanalytic set $A$ is a finite collection of subsets $\{C_j^q\}$ such that each $C_j^q$ is subanalytically homeomorphic to the ball $B^q = \{x\in \mathbb R^q : |x|<1\}$ ($C^q_j$ is then called a \emph{cell of dimension $q$}) and satisfies the following properties: 
\begin{enumerate}
\item $A= \bigcup_{j,q} C_j^q$
\item The closure $\overline C_j^q$ is the union of $C_j^q$ and cells of strictly smaller dimension.
\end{enumerate}
In this case way say that the sets $C_j^q$ form a \textit{stratification}.

We will need the following local triviality result of Hardt \cite{Hardt80} for bounded subanalytic maps. 

\begin{theorem}[\textbf{Hardt}]
\label{thm:Hardt}
Let $X$ and $Y$ be bounded subanalytic sets and $f:X\to Y$ be a continuous subanalytic map. Then there exist a finite subanalytic stratification $\{Y_1,\ldots,Y_k\}$ of $Y$, a collection $\{F_1,\ldots, F_k\}$ of bounded subanalytic sets, and subanalytic homeomorphisms $g_j\colon f^{-1}(Y_j)\to Y_j\times F_j$, such that $f|_{f^{-1}(Y_j)} = \pi \circ g_j$, where $\pi$ denotes the projection $\pi:Y_j\times F_j\to Y_j$.
\end{theorem}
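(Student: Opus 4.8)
The plan is to prove the theorem by reducing it to a projection map and then exploiting the cell-decomposition structure of subanalytic sets. First I would reduce to the case of a projection. After embedding the bounded subanalytic set $X$ into some $\R^n$ and replacing $X$ by its graph $\Gamma_f = \{(f(x),x) : x \in X\} \subset Y \times \R^n$ — which is bounded and subanalytic because $f$ is a continuous subanalytic map and $X$ is bounded — the map $f$ becomes the restriction to $\Gamma_f$ of the coordinate projection $\pi \colon Y \times \R^n \to Y$, under the subanalytic homeomorphism $X \cong \Gamma_f$, $x \mapsto (f(x),x)$. A trivialization of $\pi|_{\Gamma_f}$ therefore transfers back to $f$ through this homeomorphism, so it suffices to prove local triviality for the projection of a bounded subanalytic set $Z \subset Y \times \R^n$ onto $Y$. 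Because all sets involved are bounded, they are definable in the o-minimal structure $\R_{\mathrm{an}}$ of restricted analytic functions, and the entire argument can be run inside that structure.

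Next I would invoke the cell decomposition theorem adapted to the map. Applying cell decomposition to $Z \subset Y \times \R^n$ yields a finite partition of $Y \times \R^n$ into subanalytic cells compatible with $Z$, together with an induced finite subanalytic stratification $\{Y_1, \dots, Y_k\}$ of $Y$, such that over each stratum $Y_j$ the trace $Z \cap \pi^{-1}(Y_j)$ has cylindrical form: iterating in the fibre coordinates $x_1, \dots, x_n$, each cell appearing over $Y_j$ is either the graph $\{x_i = \xi(y, x_1, \dots, x_{i-1})\}$ of a continuous bounded subanalytic function $\xi$, or a band $\{\xi^- < x_i < \xi^+\}$ lying between two consecutive such graphs, and the combinatorial type of this stack is constant as $y$ ranges over $Y_j$. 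This uniformity over $Y_j$ — constancy of the number of graphs and bands and continuity of the bounding functions — is exactly what the cell decomposition theorem provides, and it is the structural heart of the proof.

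With this structure in hand, I would construct the trivializing homeomorphism $g_j$ over each $Y_j$ by fibrewise interpolation. Fix a base point $y_0 \in Y_j$ and set $F_j = Z \cap \pi^{-1}(y_0)$, the model fibre. I build a subanalytic homeomorphism $Z \cap \pi^{-1}(Y_j) \to Y_j \times F_j$ one fibre coordinate at a time: in the $x_i$-direction, send each band $\{\xi^-(y,\cdot) < x_i < \xi^+(y,\cdot)\}$ to the corresponding band $\{\xi^-(y_0,\cdot) < x_i < \xi^+(y_0,\cdot)\}$ by the unique increasing affine map matching the two boundary values, and send each graph to the corresponding graph. Because the bounding functions $\xi^{\pm}$ are continuous, bounded, and depend subanalytically on $(y, x_1, \dots, x_{i-1})$, these interpolations are subanalytic, glue continuously across the separating graphs, and are invertible with subanalytic inverse; composing the interpolations for $i = 1, \dots, n$ produces the desired homeomorphism $g_j$. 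Since every step preserves the $Y$-coordinate, $\pi \circ g_j = f$ on $f^{-1}(Y_j)$ by construction.

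The main obstacle is the cell decomposition step, specifically obtaining the \emph{uniform} cylindrical structure over each stratum $Y_j$ with continuous bounding functions of constant combinatorial type; this is where the deep finiteness properties of subanalytic sets — uniform bounds on the number of connected components of the fibres, together with continuity and monotonicity of the cell-boundary functions — are indispensable. The interpolation construction, while requiring care about continuity where adjacent bands meet along their common graph and about bands that degenerate as $y$ approaches the frontier of $Y_j$, is then routine precisely because boundedness keeps every $\xi^{\pm}$ finite and continuous. I would also emphasize that boundedness of $X$ and $Y$ is used twice: to place everything inside $\R_{\mathrm{an}}$, and to ensure that the model fibres $F_j$ and the interpolating maps remain bounded, so that each $g_j$ is a homeomorphism onto a bounded subanalytic set.
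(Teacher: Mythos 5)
You should first note that the paper contains no proof of this statement at all: it is Hardt's local triviality theorem, quoted verbatim with a citation to \cite{Hardt80}, so the comparison here is with Hardt's original argument (equivalently, with the o-minimal Trivialization Theorem), not with anything proved in the paper. Your opening reductions are correct and standard: replacing $f$ by the projection of its graph $\Gamma_f$, using boundedness to place everything among globally subanalytic, i.e.\ $\R_{\mathrm{an}}$-definable, sets, and extracting from cell decomposition a finite subanalytic stratification $\{Y_j\}$ of the base over which $Z\cap\pi^{-1}(Y_j)$ has an iterated graph-and-band structure of constant combinatorial type. Even the interpolation in the \emph{first} fibre coordinate is sound: matching graphs to graphs and bands to bands affinely gives a map jointly continuous in $(y,x_1)$, because adjacent pieces agree along the separating graphs.

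The genuine gap is in the iteration. From the second fibre coordinate on, your interpolation is defined cell by cell over the iterated cylindrical decomposition, and the pieces fail to glue continuously along the frontiers of the \emph{fibre} cells, all of which lie inside the single preimage $f^{-1}(Y_j)$ where continuity is required. Concretely, for $n=2$: over a band $C=\{\xi^-(y)<x_1<\xi^+(y)\}$ the $x_2$-stack is bounded by functions $\eta_i(y,x_1)$ that are continuous on $C$ but in general admit no continuous extension to the bounding graph $\{x_1=\xi^-(y)\}$; as $x_1\downarrow\xi^-(y)$ the number of graphs and bands in the $x_2$-direction can jump and the $\eta_i$ can collide or oscillate, so the map built over $C$ need not agree in the limit with the map built over the graph cell. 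Your caveat about ``bands that degenerate as $y$ approaches the frontier of $Y_j$'' misses the point: nothing is required at the frontier of $Y_j$, but everything is required across interior fibre-cell frontiers, and cell decomposition gives no control there. This is exactly why trivialization is not a corollary of cell decomposition. Hardt's proof runs through subanalytic isotopy and gluing techniques, and the o-minimal proof (van den Dries) is a delicate induction on fibre dimension whose induction hypothesis is the \emph{stronger} statement that one can trivialize while simultaneously respecting finitely many prescribed definable subsets, precisely so that the lower-dimensional trivializations can be matched continuously along the offending frontiers. Without some such strengthening, your construction yields only a cell-wise homeomorphism --- a definable bijection that may be discontinuous on $f^{-1}(Y_j)$ --- and the theorem does not follow.
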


\section{Basic properties of the mapping locus}
\label{sec:propLocus}

In this section we describe in more details some main properties of the mapping locus. In particular we are going to provide the proof of \cref{thm:semi-analytic}.

\begin{proof}[Proof of \cref{thm:semi-analytic}]
We follow the same steps as in \cite{dSJL15}. By \cite{Frisch67}, for any bounded semi-analytic set $B\subset X$ the ring $C^\omega(\overline B)$ is Noetherian. It follows that the ring $C^\omega(\overline B)[\Lambda]$ is also Noetherian.
To prove that $E_{\mathcal F}$ is locally semi-analytic for any $p\in E_{\mathcal F}$ we consider an open, bounded semi-analytic neighborhood $B$ of $p$. We will show that $E_{\mathcal F} \cap B$ is semi-analytic. 

Let $A$ be as in \eqref{defEquationJetParam}. Notice that by construction, it holds that $E_\mathcal F\cap B$ is precisely equal to the projection of $A\cap (B\times J_p^{\tz})$. Write now $A = \bigcup_{j\in J} A_j$, where $A_j = A \cap U_j$. Then since $E_{\mathcal F} \cap B$ is the union of the projections of $A_j\cap (B\times J_p^{\tz})$ and the finite union of semi-analytic sets is semi-analytic, it is enough to prove that each projection of $A_j\cap (B \times J_p^{\tz})$ is semi-analytic. In the following we fix $j\in J$ and omit the dependence on $j$ notationally.  Furthermore $c_i\in C^\omega(\overline B)$ for all $i\in \mathbb N$. Hence there exist finitely many indices $i_1,\ldots, i_\ell$ such that 
\[A\cap (\overline B\times J_p^{\tz})=\{c_{i_1}=c_{i_2}=\ldots=c_{i_\ell}=0\}.\]
It follows that $A\cap (\overline B\times J_p^{\tz})$ is definable in the sense of Lojasiewicz. By Lojasiewicz's Theorem (cf.\ \cite{dSJL15}), the projection of $A\cap (\overline B\times J_p^{\tz})$ onto $X$ is a semi-analytic subset of $X$. 
%Since this is true for all semi-analytic sets $B$, it follows that $E_{\mathcal F}$ itself is semi-analytic.
\end{proof}

The next result gives more information about maps $H_\epsilon: (M,p) \rightarrow (M'_\epsilon,p')$ with $\epsilon \in E_{\mathcal F}$. More specifically, such maps can be (at least generically) selected to depend on $\epsilon$ in an analytic way. The following result holds, cf. \cite{dSJL15}*{Lemma 8} for the equidimensional case:

\begin{lemma}\label{q0}
Suppose $E_{\mathcal F}$ contains a real-analytic submanifold  $R \subset  X$, and let $\epsilon \in R$ and fix a neighborhood $U \subset R$ of $\epsilon$. Then there exist $\delta_1 \in U$, a neighborhood $V$ of $\delta_1$, a real-analytic map $L: V \cap R \rightarrow J^{\tz}_0$ and maps $\mathcal F\ni H^{\delta}\colon (M,0) \to (M'_{\delta},0)$ such that $j^{\tz}_0 H^{\delta} = L(\delta)$ for all $\delta\in V$.
\end{lemma}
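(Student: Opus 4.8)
The plan is to realize $L$ as an analytic parametrization of a single real-analytic stratum of $A$ lying over $R$, upgrading the merely continuous selection that a general-position argument would give to a genuinely analytic one. Throughout I normalize $p = p' = 0$ and, exactly as in the proof of \cref{thm:semi-analytic}, I pass to a bounded semi-analytic neighborhood of $\epsilon$ so that all the sets in play are bounded and subanalytic and admit cell decompositions. Replacing $R$ by the given relatively open neighborhood $U$ from the outset, it suffices to produce $\delta_1 \in U$ together with the desired data over a neighborhood of $\delta_1$ in $R$; this automatically guarantees $\delta_1 \in U$. The object I work with is the set $A \subset X \times J_0^{\tz}$ of \eqref{defEquationJetParam}, whose fiber over $\epsilon$ is $A_\epsilon = j_0^{\tz}(\mathcal F_\epsilon)$ and whose projection $\pi \colon A \to X$ onto the first factor satisfies $\pi(A) = E_{\mathcal F} \supset R$.

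First I would stratify. The set $W := A \cap \pi^{-1}(U)$ is semi-analytic with $\pi(W) = U$; I take a cell decomposition $W = \bigcup_\alpha S_\alpha$ into finitely many connected real-analytic submanifolds $S_\alpha$ as in \cref{sec:realGeometry}. Since $U = \bigcup_\alpha \pi(S_\alpha)$ is a finite union, some image $\pi(S_{\alpha_0})$ has dimension equal to $\dim R$; being a subanalytic subset of full dimension in $R$, it contains a relatively open subset $\Omega \subset R$. After shrinking $S_{\alpha_0}$ I may also assume $S_{\alpha_0} \subset \{q_j \neq 0\}$ for a single index $j \in J$, so that the parametrization $\Phi_j$ is available on $S_{\alpha_0}$.

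The heart of the argument is the analytic selection over $R$. Because $\dim \pi(S_{\alpha_0}) = \dim R$, the real-analytic map $\pi|_{S_{\alpha_0}}$ has generic rank $\dim R$, hence is a submersion onto $R$ on an open dense subset $S_{\alpha_0}^{*} \subset S_{\alpha_0}$. I choose $s^{*} \in S_{\alpha_0}^{*}$ with $\delta_1 := \pi(s^{*}) \in \Omega \subset U$ and apply the constant-rank (submersion) theorem in the real-analytic category: this yields a neighborhood $V$ of $\delta_1$ and a real-analytic section $\delta \mapsto (\delta, L(\delta)) \in S_{\alpha_0}$ of $\pi$ over $V \cap R$ passing through $s^{*}$, where $L \colon V \cap R \to J_0^{\tz}$ is the analytic map given by the $J_0^{\tz}$-component. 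By construction $(\delta, L(\delta)) \in S_{\alpha_0} \subset A$ and $q_j(\delta, L(\delta)) \neq 0$ for all $\delta \in V \cap R$. Setting $H^{\delta} := \Phi_j(\cdot, \delta, L(\delta))$ and invoking the final assertion of \textbf{JPP} in \cref{def:jetparamdef} (together with property (b), which shows $\Phi_j$ restores the prescribed jet), each $H^{\delta}$ is a map in $\mathcal F_{\delta}$ with $H^{\delta}(0) = 0$ and $j_0^{\tz} H^{\delta} = L(\delta)$; since $L$ and $\Phi_j$ are analytic, $H^\delta$ depends analytically on $\delta$.

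I expect the main obstacle to be exactly the passage from a continuous to an analytic selection. A direct appeal to Hardt's local triviality \cref{thm:Hardt} only produces a continuous subanalytic section of $\pi$ over a stratum of $R$, which is not enough. The analyticity instead comes from working with the analytic cell decomposition of $A$ and the generic-rank argument on a single stratum; the crucial point that must be checked carefully is the dimension count ensuring that some stratum $S_{\alpha_0}$ projects onto a full-dimensional, hence relatively open, piece of $R$, for it is precisely this that makes $\pi|_{S_{\alpha_0}}$ a submersion onto $R$ and at the same time allows $\delta_1$ to be placed inside the prescribed neighborhood $U$.
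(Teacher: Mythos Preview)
Your argument is correct and follows essentially the same route as the paper's: both restrict $A$ over $R$ (resp.\ $U$), locate a smooth real-analytic piece on which the projection to $X$ has rank equal to $\dim R$, and then invoke the constant rank/submersion theorem to produce an analytic section $\delta\mapsto(\delta,L(\delta))$ and set $H^{\delta}=\Phi_j(\cdot,\delta,L(\delta))$. The only difference is packaging---the paper works directly with the regular part $\mathcal R^{\rm reg}$ of $\pi^{-1}(R)\cap A$ and its maximal-rank locus, while you first stratify $W$ and select a stratum with full-dimensional image; this extra step is harmless (and in fact makes the dimension count ensuring $r_0=\dim R$ a bit more transparent).
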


\begin{proof} Denote by $\pi \colon X \times J^{\tz}_0 \to X$ the projection
onto the first factor, and define $\mathcal R \coloneqq \pi^{-1} (R) \cap A$.
Then $\mathcal R$ is a semi-analytic subset of $X \times J^{\tz}_0$. Let
$\mathcal{ R}^{\rm reg} $ be the regular part of $\mathcal R$, which is an
open dense smooth semi-analytic subset of $\mathcal R$. For any point $a \in
\mathcal{ R}^{\rm reg} $ define $r(a) \coloneqq \rank(\pi|_{\mathcal{ R}^{\rm
reg} }(a))$ and let $r_0 \coloneqq \max_{a\in \mathcal{ R}^{\rm reg} } r(a)$.
Define $\tilde{\mathcal{ R}} \coloneqq \{a\in \mathcal{ R}^{\rm reg} : r(a) =
r_0\}$, then $\tilde{\mathcal{ R}}$ is an open dense subset of $\mathcal{
R}^{\rm reg} $, and thus of $\mathcal R$. Since $\pi (\mathcal{R}) = R$ 
by assumption, we note that 
% by \textbf{?? reference,
% like Sard's theorem? ???} 
it holds that $\tilde{R} \coloneqq \pi(\tilde{\mathcal{
R}})$ is open and dense in $R$. Let $\delta_1 \in U \cap \tilde{R}$ and let $a_1 \in
\tilde{\mathcal{ R}}$ such that $\pi(a_1) = \delta_1$, i.e.
$a_1=(\delta_1,\Lambda_1)$ for a certain $\Lambda_1 \in J^{\tz}_0$. By the
constant rank theorem there is a neighborhood $\mathcal V$ of $a_1\in
\tilde{\mathcal{ R}}$, a neighborhood $V$ of $\delta_1$ in $\tilde{R}$, a ball $B$ in
some $\R^m$ and an analytic diffeomorphism $\psi: \mathcal V \rightarrow V
\times B$, such that $\psi(a_1) = (\delta_1,0)$, and such that $\pi \circ \psi = \pi$.
% the following
% diagram commutes: 
% \begin{center} \begin{tikzcd}[column sep = 4em, row sep =
% 3em] \pi^{-1}(\mathcal V)\ar["\psi"]{r} \dar["\pi"]&  V \times B  \dar["\pi"]
% \\  V \ar["\id"]{r}  
%  &  V 
%  \end{tikzcd} 
%  \end{center} 
Define $N \coloneqq V
\times \{0\} \subset V \times B$ and consider $L \coloneqq \pi_2 \circ
\psi^{-1}|_N$, where $\pi_2$ is the projection on the second factor, the jet
space. Then $L$ is a real-analytic map %(cf. \textbf{Remark 4 in \cite{dSJL15}} for semi-algebraic
and the proof is concluded by setting $H^{\delta} \coloneqq
\Phi(\delta,L(\delta))$ for all $\delta\in V$.
\end{proof}

The previous selection lemma can be used to generically estimate the dimension of the mapping locus by means of the space of infinitesimal deformations, cf. \cite{dSJL15}*{Theorem 6} for the equidimensional case.

\begin{theorem}
\label{thm:tangentSpaceInfDef}
Let $(M'_{\epsilon}, 0)$, $\epsilon \in X$ be a deformation of $(M', 0)$ as before. Let $S$ be a semi-analytic subset of $E_{\mathcal F}$ and $S^{\rm reg}$ be the set of regular points in $S$. Then there exists an open dense subset $D$ of $S^{\rm reg}$ with the following properties:
\begin{itemize}
\item[(i)] For every $\epsilon \in D$ there exists a neighborhood $U$ of $\epsilon$ in $S^{\rm reg}$ and an analytic map $\phi: U \rightarrow \mathcal F$ such that $\phi(\delta): (M,0) \rightarrow (M'_{\delta},0)$ (note that $U$ as an open subset of $S^{\rm reg}$ is an analytic submanifold).
\item[(ii)] For all $\delta\in D$ we have:
\begin{align*}
T_{\delta} S^{\rm reg} \subset \pi_1(\hol(\phi(\delta),\mathfrak{D})).
\end{align*}
\item[(iii)] If $M'_{\epsilon}$ is a base-point-type deformation (in particular $X=M'$), then for all $\delta\in D$ we have 
\begin{align*}
T_{\delta} S^{\rm reg} \subset \hol(\psi(\delta))(0)
\end{align*}
where we define $\psi(\delta)=\phi(\delta)+\delta$.
\end{itemize}
\end{theorem}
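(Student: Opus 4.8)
The plan is to deduce the statement from the selection \cref{q0} together with the infinitesimal description of tangent directions furnished by \cref{rem:derivDef}, \cref{cor:differentHol} and \cref{lem:differentHol}. Since $S\subset E_{\mathcal F}$, its regular part $S^{\rm reg}$ is, near each of its points, a real-analytic submanifold contained in $E_{\mathcal F}$, so (locally, on each component of constant dimension) I may take $R=S^{\rm reg}$ in \cref{q0}. Inspecting the proof of that lemma, the locus $\tilde R=\pi(\tilde{\mathcal R})$ on which the rank of $\pi|_{\mathcal R^{\rm reg}}$ is maximal and constant is open and dense in $R$, and at each of its points the constant rank theorem yields a local real-analytic map into the jet space and hence an analytic selection $\phi(\delta)=\Phi(\delta,L(\delta))\in\mathcal F$ with $\phi(\delta)\colon(M,0)\to(M'_\delta,0)$. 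Setting $D:=\tilde R$ then produces an open dense subset of $S^{\rm reg}$ for which (i) holds.

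For (ii) I fix $\delta\in D$ and $v\in T_\delta S^{\rm reg}$, and choose a smooth curve $\delta(t)\subset S^{\rm reg}$ with $\delta(0)=\delta$ and $\delta'(0)=v$. Composing with the selection from (i) gives a curve $(\delta(t),\phi(\delta(t)))$ with $\phi(\delta(t))\colon(M,0)\to(M'_{\delta(t)},0)$. By \cref{rem:derivDef} its derivative $(v,Y)$, where $Y=\tfrac{d}{dt}\big|_{t=0}\phi(\delta(t))$, lies in $\hol(\phi(\delta),\mathfrak D)$, and moreover $Y(0)=0$ because every $\phi(\delta(t))$ fixes the base point. Projecting to the first factor gives $v=\pi_1(v,Y)\in\pi_1(\hol(\phi(\delta),\mathfrak D))$; as $v$ was arbitrary, this is exactly $T_\delta S^{\rm reg}\subset\pi_1(\hol(\phi(\delta),\mathfrak D))$.

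For (iii) I specialize to the base-point-type deformation and recenter at $\delta$. Near $\delta$ the family $M'_\epsilon=M'-\epsilon$ is, after the substitution $\epsilon=\delta+\eta$, the base-point-type deformation of the germ $(M'_\delta,0)$ (note $0\in M'_\delta$ since $\phi(\delta)(0)=0$), so \cref{cor:differentHol} applies to $H=\phi(\delta)$ and gives $\pi_1(\hol(\phi(\delta),\mathfrak D))=\hol(\phi(\delta))(0)$. Finally, the defining conditions for $\hol(\phi(\delta))$ (target $M'_\delta$) and for $\hol(\psi(\delta))$ (target $M'$), with $\psi(\delta)=\phi(\delta)+\delta$, coincide: writing $M'_\delta=\{\varrho(w+\delta,\bar w+\bar\delta)=0\}$ for a generating set $\varrho$ of $\mathcal I_0(M')$ one has $\varrho_w(\phi(\delta)+\delta,\overline{\phi(\delta)}+\bar\delta)=\varrho_w(\psi(\delta),\overline{\psi(\delta)})$, so the two spaces of infinitesimal deformations are literally the same. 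Combining with (ii) yields $T_\delta S^{\rm reg}\subset\hol(\phi(\delta))(0)=\hol(\psi(\delta))(0)$, which is (iii).

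The routine parts are the curve-and-projection computation in (ii) and the sign bookkeeping for the recentering in (iii). The main obstacle is the organization of (i): one must guarantee that a \emph{single} open dense $D$ extracted from \cref{q0} simultaneously supports a genuine analytic selection on a full neighborhood in $S^{\rm reg}$ (not merely at one point), which forces working on the maximal-rank locus $\tilde R$ and, if $S^{\rm reg}$ has components of differing dimension, stratifying and taking the union of the resulting dense sets. Once the analytic selection is available, (ii) and (iii) follow formally from the infinitesimal machinery already established.
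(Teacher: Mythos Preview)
Your proposal is correct and follows essentially the same approach as the paper: density of $D$ via \cref{q0} (you extract $D=\tilde R$ directly from its proof, whereas the paper defines $D$ as the set of points satisfying (i)--(ii) and shows density by the same lemma), the curve-differentiation argument for (ii), and \cref{cor:differentHol} for (iii). Your treatment of (iii) is in fact more explicit than the paper's one-line appeal to \cref{cor:differentHol}, since you spell out the recentering at $\delta$ and the identification $\hol(\phi(\delta))=\hol(\psi(\delta))$ that are needed to apply that corollary away from the distinguished parameter.
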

\begin{proof}
Note that since $S$ is semi-analytic, by \cite{Lojasiewicz65, BM88} $S^{\rm reg}$ is a dense semi-analytic subset of $S$. Let $D$ be the set of the points satisfying (i) and (ii): We will show that $D$ is an open, dense subset of $S^{\rm reg}$.  Let $\epsilon_0\in S^{\rm reg}$ and let $O$ be a neighborhood of $\epsilon_0$, and let $\delta_1\in X$ be the point given by \cref{q0} (with $R=O\cap S^{\rm reg}$): we will show that $\delta_1\in D$. Indeed, take $U=V$, where $V$ is given by \cref{q0}. To show that property (i) is satisfied, define $\phi$ as $\phi(\delta)=H^{\delta}$ for all $\delta\in U$, where $H^\delta$ is the one from \cref{q0}.
To establish (ii) let $v\in T_{\delta_1} S^{\rm reg}$. Let $\delta(t)$ be a smooth curve in (a neighborhood of $\delta_1$ in) $D$ such that $\delta(0)=\delta_1$ and $\delta'(0)=v$. We define a smooth curve of maps $c_t:(M,0)\to (M'_\delta(t),0)$ as $c_t=\phi(\delta(t))$. Let $Y$ be the derivative $\frac{dc_t}{dt}|_{t=0}$ of $c_t$ at $t=0$, then $(v,Y) \in \hol(c_0,\mathfrak{D}) =\hol(\phi(\delta(0)),\mathfrak{D})=\hol(\phi(\delta_1),\mathfrak{D})$ (cf. \cref{rem:derivDef}). Thus $v = \pi_1(v,Y) \in \pi (\hol(\phi(\delta_1),\mathfrak{D}))$. Since $v$ is an arbitrary vector of $T_{\delta_1}S^{\rm reg}$, we conclude that $T_{\delta_1} S^{\rm reg} \subset \pi_1(\hol(\psi(\delta_1),\mathfrak{D}))$.
In the case of the base-point-type deformation, (iii) is a direct consequence of \cref{cor:differentHol}.
%We first of all define $\psi(\delta)=\phi(\delta)+\delta$, so that $\psi$ maps a neighborhood of $0$ in $M$ to a neighborhood of $\delta$ in $M'$ and $\psi(\delta)(0)=\delta$. Let $v\in T_{\delta_1} S^{\rm reg}$. Let $\delta(t)$ be a smooth curve in (a neighborhood of $\delta_1$ in) $D$ such that $\delta(0)=\delta_1$ and $\delta'(0)=v$. We define a smooth curve of maps $c_t:M\to M'$ as $c_t=\psi(\delta(t))$. Note that $c_t(0)=\delta(t) $. By definition, the derivative $\frac{dc_t}{dt}|_{t=0}$ of $c_t$ at $t=0$ gives an element $Y$ of $\hol(c_0)=\hol(\psi(\delta(0)))=\hol(\psi(\delta_1))$ (cf. \cite[Lemma 21]{embed}). Since $c_t(0)=\delta(t)$, we have that
%\[Y(0)=\left. \frac{dc_t(0)}{dt}\right|_{t=0} =\left. \frac{d\delta(t)}{dt}\right |_{t=0} = v.\]
%Thus we have that $v\in \hol(\psi(\delta_1))(0)$. Since $v$ is an arbitrary vector of $T_{\delta_1}S^{\rm reg}$, we conclude that $T_{\delta_1} S^{\rm reg} \subset \hol(\psi(\delta_1))(0)$.
This shows that $\delta_1 \in D$.
Repeating the same arguments for any $\delta \in U$ shows that $\delta \in D$, hence $D$ is an open subset of $S^{\rm reg}$.
\end{proof}

It is worth noting that a stronger selection lemma than in \cref{q0} can be proved using more advanced results of real-analytic geometry. The following lemma is the analogous of \cite[Lemma 6]{dSJL15}.

\begin{lemma}
Let $\epsilon\in E_{\mathcal F}$, and fix a neighborhood $U$ of $\epsilon$. Then there exists $\delta_0\in E_{\mathcal F}\cap U$ such that for every map $\mathcal F\ni H^{\delta_0}\colon (M,0) \to (M'_{\delta_0},0)$ and every neighborhood $W$ of $j_0^{\tz} H^{\delta_0}$  there exists a neighborhood $V$ of $\delta_0$  such that the following holds: For all  $\delta\in V\cap E_{\mathcal F}$ there exists a map $\mathcal F\ni H^{\delta}:(M,0)\to (M'_\delta,0)$ such that $j_0^{\tz} H^\delta \in W$.
%Furthermore, if $E_{\mathcal F}$ contains a real-analytic submanifold $R$ of $X$, such that $\epsilon \in R$, then there exist $\delta_1 \in R$, a neighborhood $V$ of $\delta_1$, a real-analytic map $L: V \cap R \rightarrow J^{\tz}_0$ and maps $\mathcal F\ni H^{\delta'}\colon (M,0) \to (M'_{\delta'},0)$ such that $j^{\tz}_0 H^{\delta'} = L(\delta')$ for all $\delta'\in V$.
\end{lemma}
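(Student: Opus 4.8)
The plan is to upgrade the constant-rank selection of \cref{q0} into a genuine local triviality statement by invoking Hardt's theorem (\cref{thm:Hardt}). Recall from JPP, specifically from \eqref{defEquationJetParam}, that locally near $\epsilon$ the mapping locus $E_{\mathcal F}$ is the image under the projection $\pi\colon X\times J_0^{\tz}\to X$ of the set $A\subset X\times J_0^{\tz}$, and that by \cref{thm:semi-analytic} the set $E_{\mathcal F}$ is semi-analytic. First I would pass to a bounded picture: intersecting with a ball around $\epsilon$ I may assume $E_{\mathcal F}$ bounded. Since the fibres $A\cap(\{\delta\}\times J_0^{\tz})$ need not be bounded, I would compactify the jet space $J_0^{\tz}$ and replace $A$ by its closure $\overline A$ in $X\times\overline{J_0^{\tz}}$, so that $\pi\colon\overline A\to E_{\mathcal F}$ becomes a continuous subanalytic map between bounded subanalytic sets, the finite jets (those corresponding to actual maps) lying in the interior of the compactification.

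Next I would apply \cref{thm:Hardt} to this map, obtaining a finite subanalytic stratification $\{Y_1,\dots,Y_k\}$ of $E_{\mathcal F}$, bounded subanalytic fibres $\{F_1,\dots,F_k\}$, and subanalytic homeomorphisms $g_i\colon\pi^{-1}(Y_i)\to Y_i\times F_i$ with $\pi=\proj\circ g_i$ on $\pi^{-1}(Y_i)$, where $\proj\colon Y_i\times F_i\to Y_i$ denotes the projection. After refining to a stratification satisfying the frontier condition (\cref{sec:realGeometry}), the strata that are relatively open in $E_{\mathcal F}$ have dense union, so I may pick $\delta_0\in U\cap E_{\mathcal F}$ lying in such a stratum $Y_{i_0}$; by the frontier condition there is then a neighbourhood $V_0$ of $\delta_0$ with $V_0\cap E_{\mathcal F}\subset Y_{i_0}$. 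This is the crucial feature that allows a single trivialization chart to control every nearby point of the mapping locus at once. (If $\epsilon$ is isolated in $E_{\mathcal F}$ one simply takes $\delta_0=\epsilon$ and the conclusion is vacuous.)

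With $\delta_0$ and the chart $g_{i_0}$ now fixed, I would fix an arbitrary map $\mathcal F\ni H^{\delta_0}\colon(M,0)\to(M'_{\delta_0},0)$, set $\Lambda_0\coloneqq j_0^{\tz}H^{\delta_0}$ and $a_0\coloneqq(\delta_0,\Lambda_0)\in\pi^{-1}(\delta_0)$, and write $g_{i_0}(a_0)=(\delta_0,f_0)$. For $\delta\in V_0\cap E_{\mathcal F}=V_0\cap Y_{i_0}$ the point $a(\delta)\coloneqq g_{i_0}^{-1}(\delta,f_0)$ lies in $\pi^{-1}(\delta)$; let $\Lambda(\delta)$ be its jet component. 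Since $g_{i_0}^{-1}$ is continuous and $a(\delta_0)=a_0$, we have $a(\delta)\to a_0$ as $\delta\to\delta_0$, hence $\Lambda(\delta)\to\Lambda_0$; in particular $\Lambda(\delta)$ stays in the finite part of the compactified jet space and lies in $W$ once $\delta$ is confined to a sufficiently small neighbourhood $V\subset V_0$ of $\delta_0$. Because $(\delta,\Lambda(\delta))\in A$, the characterization in \eqref{defEquationJetParam} guarantees that $\Lambda(\delta)=j_0^{\tz}H^\delta$ for some $H^\delta=\Phi_j(\,\cdot\,,\delta,\Lambda(\delta))\in\mathcal F_\delta$, which yields the assertion for every $\delta\in V\cap E_{\mathcal F}$.

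The main obstacle I anticipate is exactly the interplay between the boundedness hypothesis of Hardt's theorem and the potential non-compactness of the jet fibres of $A$: the compactification must be arranged so that finite jets remain interior points, so that continuity of $g_{i_0}^{-1}$ forces $\Lambda(\delta)$ to be a finite jet near $\Lambda_0$ rather than escaping to the added points at infinity. The second delicate point is the selection of $\delta_0$; one must use the frontier condition to ensure that a whole neighbourhood of $\delta_0$ inside $E_{\mathcal F}$ lies in a single stratum, for otherwise distinct nearby parameters could carry only jets far from $\Lambda_0$ and the uniform conclusion over $V\cap E_{\mathcal F}$ would fail. Everything else is a formal consequence of the homeomorphism property of the trivializing chart together with the reconstruction of maps from jets provided by JPP.
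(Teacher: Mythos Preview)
Your proposal is correct and follows essentially the same route as the paper: both arguments compactify the jet fibre (the paper phrases this as the ``fibrewise projectivization'' of Hardt's Step~I, exploiting that $A$ is polynomial in $\Lambda$), apply Hardt's triviality theorem to the projection $\pi$, pick $\delta_0$ in a stratum of locally maximal dimension so that the frontier condition forces a full neighbourhood of $\delta_0$ in $E_{\mathcal F}$ into that single stratum, and then use the trivializing homeomorphism to slide the fibre point $f_0$ over to nearby parameters and recover $H^\delta$ via $\Phi_j$. The only cosmetic difference is that the paper first reduces to a single chart $A_j$ before compactifying, whereas you compactify globally; neither affects the argument.
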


\begin{proof}
Let $A$ be as in \eqref{defEquationJetParam}. It is enough to show the conclusion for each $A_j$, which we denote by $A$ by an abuse of notation. Just as in the proof of \cref{thm:semi-analytic}, for a small ball $B\subset U$ we can write $A\cap (\overline B\times J_p^{\tz})$ as the intersection of the vanishing set of finitely many $c_i$. Using the same argument as Hardt (see \cite{Hardt80}, Step I of the proof of the main theorem) we can reduce to the case of bounded subanalytic sets, by a fibrewise projectivization of $A$; this is possible since $A$ is defined by functions which are polynomial in the jet variable. 
By the version of Hardt's theorem for subanalytic sets (see \cite{Hardt80}, in particular the remarks starting at the end of page 291), there exists a partition of $\pi(A\cap B)$  into subanalytic sets $C_1,\ldots,C_m$  in such a way that $\pi|_{\pi^{-1}(C_j)}$ is trivial for $1\leq j\leq m$. Furthermore we can find a stratification $C^i_j$ of $\pi(A\cap B)$ by smooth subanalytic sets respecting $\{C_1,\ldots,C_m\}$  (i.e.\ $\overline C^i_j$ is the union of $C^i_j$ and strata of strictly smaller dimension): see \cite{Lojasiewicz65}. Let $d= \max\{d':\exists \delta\in E_{\mathcal F}\cap B \ {\rm s.t.}\ \delta \in C^i_j\ {\rm and}\ \dim C^i_j = d'  \}$. Let $\delta_0\in  E_{\mathcal F}\cap B$ such that $\delta_0\in C^{i_0}_{j_0}$ with $\dim C^{i_0}_{j_0}=d$. Then, by the stratification property, there exists a neighborhood $V'$ of $\delta_0$ such that $E_{\mathcal F}\cap V'\subset C^{i_0}_{j_0}$ (see for instance \cite{dSJL15}*{proof of Lemma 6}).
By Hardt's theorem there exist a subanalytic set $Y \subset J_0^{\tz}$ and a subanalytic homeomorphism $\psi:\pi^{-1}(C^{i_0}_{j_0}) \rightarrow C^{i_0}_{j_0} \times Y$ such that $\pi \circ \psi = \pi$. 
% the following diagram commutes:
% \begin{center}
% \begin{tikzcd}[column sep = 4em, row sep = 3em]
% \pi^{-1}(C^{i_0}_{j_0})\ar["\psi"]{r} \dar["\pi"]&  C^{i_0}_{j_0} \times Y  \dar["\pi"]  \\
% C^{i_0}_{j_0} \ar["\id"]{r} 
% &  C^{i_0}_{j_0} 
% \end{tikzcd}
% \end{center}
Let $H^{\delta_0}$ be a map $(M,0) \rightarrow (M'_{\delta_0},0)$ and $W$ a neighborhood of $j_0^{\tz} H^{\delta_0}$. Define $O \coloneqq \psi((V' \times W) \cap \pi^{-1}(C^{i_0}_{j_0}))$, which is an open neighborhood of $(\delta_0, j_0^{\tz} H^{\delta_0})$ in $C^{i_0}_{j_0} \times Y$. Then there is an open set $V \subset V'$ such that for every $\delta \in V$ it holds that $(\delta,j_0^{\tz} H^{\delta_0}) \in O$.
Then $(\delta,j_\delta) \coloneqq \psi^{-1}(\delta,j_0^{\tz} H^{\delta_0})$ has the property that $j_\delta \in W$ and setting $H^{\delta} \coloneqq \Phi(\delta,j_\delta)$, where $\Phi$ is from \eqref{rationalJetParam}, shows the claim.
%To prove the second claim, we argue as before, to find a -- possibly different -- cell $C^{i_0}_{j_0}$ of the stratification $C^i_j$, and a -- possibly different -- neighborhood $V$ of $\delta_1$ such that $R \cap V \subset C^{i_0}_{j_0}$. Set $N \coloneqq (R\cap V) \times \{j_0^{\tz} H^{\delta_1} \}$, which is a real-analytic submanifold of $C^{i_0}_{j_0} \times Y$. It follows that $L \coloneqq \pi_2 \circ \psi^{-1}|_N$ is a real-analytic map (cf. \textbf{Remark 4 in \cite{DSJL2015}}?), where $\pi_2$ is the projection on the second factor, the jet space. The proof is concluded by setting $H^{\delta'} \coloneqq \Phi(\delta',L(\delta'))$ for all $\delta'\in V$.
\end{proof}

\section{A class of maps satisfying JPP}
\label{section:mapsJPP}

We are now going to define a class of maps satisfying JPP from \cref{def:jetparamdef}, namely the \textit{finitely nondegenerate maps} (\cite{Lamel01}). The following definition is taken almost verbatim from \cite{dSLR17}; since some adaptations to the setting of this paper are needed we state the definition again for the convenience of the reader.

\begin{definition}\label{defNondeg}
Let $M'$ be a real-analytic subvariety of $\C^{N'}$, $p' \in M'$ and $\varrho_1,\ldots, \varrho_d$ generators of $\mathcal I_{p'}(M')$. Let $(L_1,\ldots, L_n)$ be a basis of CR vector fields of $(M,p)$. For a multiindex $\alpha =(\alpha_1,\ldots,\alpha_n)\in \N^n$ we write $L^{\alpha} = L_1^{\alpha_1} \cdots L_n^{\alpha_n}$. Given a holomorphic map $H=(H_1,\ldots,H_{N'})\in \mathcal H ((M,p),(M',p'))$ with $H(p)=p'$, %a  defining function $\rho' =(\rho_1',\ldots, \rho'_{d'}) \in  (\mathbb C\{Z'-H(p),\zeta' - \overline{H(p)}\})^{d'}$  for $M'$ in a neighborhood of $p' = H(p)\in M'$, 
and a fixed sequence $\iota = (\iota_1,\ldots,\iota_{N'})$ of 
 multiindices $\iota_m\in\N_0^n$ and $N'$-tuple of integers $\ell = (\ell^1,\ldots, \ell^{N'})$ with $1 \leq \ell^k \leq d'$, we consider the determinant
 \begin{equation} \label{folcon}
s^{\iota,\ell}_H(Z) = \det \left(\left(L^{\iota_j}\varrho_{\ell^j, Z_k'}(H(Z),\overline H(\bar Z)) \right)_{1\leq j,k \leq N'} \right).
%s^{\iota,\ell}_H(Z) = \det \left(\begin{array}{ccc} 
%r_1^1(H(Z),\overline H(\bar Z)) & \cdots & r^1_{N'}(H(Z),\overline H(\bar Z)) \\
% L^{\iota_1}\rho'_{\ell^1, Z_1'}(H(Z),\overline H(\bar Z)) & \cdots & L^{\iota_1}\rho'_{\ell^1,Z_{N'}'}(H(Z),\overline H(\bar Z)) \\  \vdots & \ddots & \vdots \\
% L^{\iota_{N'}}\rho'_{\ell^{N'}, Z_1'}(H(Z),\overline H(\bar Z)) & \cdots & L^{\iota_{N'}}\rho'_{\ell^{N'}, Z_{N'}'}(H(Z),\overline H(\bar Z))\end{array}\right).
\end{equation}

We define the open set $\mathcal F_{k}((M,p),(M',p')) \subset \mathcal H((M,p),(M',p'))$ as the set of maps $H$  for which there exists such a sequence of multiindices $\iota = (\iota_1,\ldots,\iota_{N'})$ satisfying $k = \max_{1 \leq m \leq N'}|\iota_m|$ and $N'$-tuple of integers $\ell = (\ell^1,\ldots, \ell^{N'})$ as above  such that $s_H^{\iota,\ell} (p)\neq 0$. We define $J_{k_0}$ as the set of all pairs $j=(\iota,\ell)$, where  $\iota = (\iota_1,\ldots,\iota_{N'})$ is a sequence of multiindices  with $k_0 = \max_{1\leq m \leq N'} |\iota_m|$ and $\ell = (\ell^1,\ldots, \ell^{N'})$ is as above.
We will say that $H$ with $H(M) \subset M'$ is {\em $k_0$-nondegenerate} at $p$ if $k_0 = \min \{ k\colon H \in \mathcal{F}_k((M,p),(M',p')) \}$ is a finite number. %and that $H$ is {\em $k_0$-nondegenerate} if $k_0=\max \{k_0(p)\colon p\in M\}$ is a finite number.
We write $\mathcal{F}_{k_0}((M,p),(M',p'))$ for the (open) 
subset of $\mathcal{H} ((M,p),(M',p'))$ containing all $k_0$-nondegenerate maps. A holomorphic map $H$ with $H(M) \subset M'$ is called {\em $\ell$-finitely nondegenerate} if for each $p\in M$ the map $H$ is $k(p)$-nondegenerate at $p$ and $\ell = \max\{k(p): p \in M\}$.
\end{definition} 

Let us stress that the distinction between $k$-nondegeneracy and 
$\ell$-finite nondegeneracy is important: A map is $\ell$-finitely nondegenerate
if and only if it is $k$-nondegenerate for some $k\leq \ell$. Our next
two results show that both $k$-nondegenerate and $\ell$-finitely nondegenerate
maps satisfy the JPP. 

\begin{theorem}\label{jetparamdef} 
Let $(M,p) \subset \CN$ be a germ of a generic minimal real-analytic submanifold of $(\C^N,p)$ and $(M'_{\epsilon},p')_{\epsilon \in X}$ be a deformation. Fix $k_0 \in \N$ and let $\mathbf t$ be the minimum integer, such that the Segre map $S^{\mathbf t}_p$ of order $\mathbf t$ associated to $M$  is generically of full rank. Then $\mathcal F_{k_0,\epsilon} \coloneqq \mathcal F_{k_0}((M,p),(M'_{\epsilon},p'))$ satisfies \cref{def:jetparamdef} with $\tz = 2\mathbf t k_0$. \end{theorem}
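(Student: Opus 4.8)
The plan is to transfer the jet parametrization of \cite{Lamel01} to the present deformation setting, in the spirit of \cite{dSJL15,dSLR17}, while additionally tracking the order-$r$ tangency condition built into \cref{def:parcur,def:jetparamdef}. Throughout I would fix $j=(\iota,\ell)\in J\coloneqq J_{k_0}$ and set $q_j\coloneqq s_H^{\iota,\ell}$, the nondegeneracy determinant from \eqref{folcon}, which is polynomial in the jet variable $\Lambda$; the open set $U_j$ of \cref{def:jetparamdef} is then $\{q_j(\epsilon,\Lambda)\neq 0\}$, and $W$ is a neighbourhood of $\epsilon_0$ on which the generators $\varrho$ depend analytically. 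First I would write $M$ in normal coordinates $w=Q(z,\bar z,\bar w)$ as in \cref{sec:CRgeometry} and pass to the complexification $\mathcal M=\{(Z,\zeta):w=Q(z,\chi,\tau)\}$, treating $Z=(z,w)$ and $\zeta=(\chi,\tau)$ as independent. For a parametrized curve $(\epsilon(t),H(t))\in\parcur{r}_{(\epsilon_0,H(0))}$ the defining condition reads $\varrho(H(Z,t),\overline H(\zeta,t),\epsilon(t))=O(t^{r+1})$ on $\mathcal M$, for every $\varrho\in\mathcal I_{p'}(M'_\epsilon)$, where $\overline H$ is the conjugate map. Since the CR vector fields $L_j$ annihilate the holomorphic $H$, applying $L^{\iota_j}$ to this relation differentiates only the conjugate slot $\overline H$ and the coefficients, which is exactly the structure exploited in \cref{defNondeg}.

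\emph{Base reflection step.} Differentiating $\varrho$ in its $Z'$-variables and applying $L^{\iota_j}$, evaluated on the first Segre set, produces an $N'\times N'$ linear system for the first-order conjugate jet $\overline H_{Z'}$ whose matrix has determinant $q_j=s_H^{\iota,\ell}$. Because $H(0)$ is $k_0$-nondegenerate there is an index $j\in J$ with $q_j(\epsilon_0,j_p^{k_0}H(0))\neq 0$; by Cramer's rule I would solve for $\overline H_{Z'}$ along the first Segre set as a rational function of $(Z,\zeta,\epsilon,j_p^{k_0}H)$ with denominator a power of $q_j$ and numerator polynomial in the jet, with coefficients in $\C\{\epsilon\}$ — and, for curves in $\parcur{r}$, valid modulo $O(t^{r+1})$, since the operations are rational and preserve the order of vanishing in $t$.

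\emph{Iteration and conclusion.} By minimality and the criterion recalled in \cref{sec:CRgeometry}, the Segre map $S^{2\mathbf t}_p$ is of full rank at a point $x^0$ arbitrarily close to $0$ with $S^{2\mathbf t}_p(0)=0$. Iterating the reflection relation $2\mathbf t$ times along the chain of Segre maps, alternately expressing $H$ and $\overline H$ on successive Segre sets and consuming $k_0$ further CR derivatives at each step, propagates the rational parametrization and accumulates the jet order to $\tz=2\mathbf t k_0$. Generic submersivity of $S^{2\mathbf t}_p$ at $x^0$ lets me promote the resulting identity from the Segre sets to a full neighbourhood $V$ of $p$, yielding $\Phi_j$ of the form \eqref{rationalJetParam} and hence (b) modulo $O(t^{r+1})$; (a) holds because $q_j(p)\neq 0$. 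Finally, expanding $\varrho(\Phi_j,\overline{\Phi_j},\epsilon)=0$ into its Taylor coefficients in $Z$ yields the real-analytic, $\Lambda$-polynomial functions $c_i^j(\epsilon,\Lambda,\bar\Lambda)$, giving \eqref{defEquationJetParam}; evaluating the same expansion along curves in $\parcur{r}$ gives \eqref{defEquationJetParamcurves}.

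\emph{Main obstacle.} The delicate point is the combined bookkeeping through the $2\mathbf t$ iterations: keeping the parametrization rational in $\Lambda$ with denominators exactly powers of $q_j$, maintaining uniform analytic dependence on $\epsilon$, and ensuring that the $O(t^{r+1})$ error does not degrade to lower order when composed across Segre levels — which holds precisely because each step is a rational operation in quantities that already vanish to the right order. A secondary subtlety is the passage from identities valid only on the lower-dimensional Segre sets to an identity on the open set $V$, for which I would rely on the generic full rank of $S^{2\mathbf t}_p$ together with analyticity (the identity theorem) to propagate.
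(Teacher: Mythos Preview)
Your architecture matches the paper's: a reflection identity from \cite{Lamel01} with analytic $\epsilon$-dependence, iteration along the Segre chain to depth $2\mathbf t$ collecting $k_0$ jet-orders at each step, promotion from the Segre image to a full neighbourhood via the rank of $S_p^{2\mathbf t}$, and extraction of the $c_i^j$ by substituting $\Phi_j$ into the mapping equation. The $O(t^{r+1})$ tracking you flag is exactly what the paper does in its auxiliary lemma and corollary.

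Two points deserve correction, though neither is fatal. First, the base reflection step is described backwards. The matrix with determinant $s_H^{\iota,\ell}$ is the Jacobian in $w'$ of the system $\{L^{\iota_j}\varrho_{\ell^j}(w',\overline H(\zeta),\epsilon)=0\}_{1\le j\le N'}$; its nonvanishing lets one solve (by the implicit function theorem, with Cramer furnishing the rational form) for $H(Z)$ in terms of $(Z,\zeta,\epsilon,\partial^{k_0}\overline H(\zeta))$, not for $\overline H_{Z'}$ in terms of $j_p^{k_0}H$. Dependence on the jet \emph{at} $p$ only appears after the Segre iteration brings the second argument down to $\bar p$. Second, your promotion step is vaguer than the paper's: rather than invoking the identity theorem from a generic full-rank point $x^0$, the paper uses that $S_p^{2\mathbf t}$ has maximal rank at a point mapping to $p$ (BER99, Lemma~4.1.3), applies the constant rank theorem to produce a holomorphic right inverse $T$ of $S_p^{2\mathbf t}$, and simply sets $\Phi_j(\epsilon,\cdot,\Lambda)=\varphi_{2\mathbf t}^j(\epsilon,T(\cdot),\Lambda)$; this makes (b) and the rational form \eqref{rationalJetParam} immediate and avoids any worry about propagating identities off a lower-dimensional set.
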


The following corollary is immediate from the definition of the JPP and 
the preceding theorem. 

\begin{corollary}
\label{jetparam2} Let $(M,p) \subset \CN$ be a germ of a generic minimal real-analytic submanifold of $(\C^N,p)$ and $(M'_{\epsilon},p')_{\epsilon \in X}$ be a deformation. Fix $\ell \in \N$ and let $\mathbf t$ be the minimum integer, such that the Segre map $S^{\mathbf t}_p$ of order $\mathbf t$ associated to $M$  is generically of full rank. 
Then the family $(\mathcal{F}^{\ell}_{\epsilon})_{\epsilon\in X}$, 
where $\mathcal{F}^\ell_{\epsilon} = \bigcup_{k=0}^\ell \mathcal F_{k,\epsilon}$,  satisfies \cref{def:jetparamdef} with $\tz = 2\mathbf t \ell$.
\end{corollary}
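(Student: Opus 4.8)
The plan is to build the jet parametrization data for $(\mathcal F^\ell_\epsilon)_{\epsilon\in X}$ directly out of the $\ell+1$ separate parametrizations that \cref{jetparamdef} provides for the families $\mathcal F_{k,\epsilon}$, $0\le k\le \ell$. Each of these satisfies JPP with its own jet order $2\mathbf t k$, and since $2\mathbf t k\le 2\mathbf t\ell=\tz$, the whole point is to reconcile these differing orders: I would lift every piece of data up to the common maximal order $\tz=2\mathbf t\ell$ and then take the disjoint union of the resulting index sets. The overlaps between the $\mathcal F_{k,\epsilon}$ (a map may be nondegenerate at several orders) are harmless, since the index set in \cref{def:jetparamdef} is allowed to be redundant.

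Concretely, fix for each $k$ the data $(J^{(k)},q^{(k)}_j,\Phi^{(k)}_j,c^{(k),j}_i)$ furnished by \cref{jetparamdef} on $J_p^{2\mathbf t k}$, and let $\pi^\ell_k\colon J_p^{2\mathbf t\ell}\to J_p^{2\mathbf t k}$ be the natural truncation, which is a linear coordinate projection in $\Lambda$. I would set $J=\bigsqcup_{k=0}^\ell J^{(k)}$ and, for $j\in J^{(k)}$, define the lifted data by precomposition, namely $q_j(\epsilon,\Lambda):=q^{(k)}_j(\epsilon,\pi^\ell_k\Lambda)$ and $\Phi_j(Z,\epsilon,\Lambda):=\Phi^{(k)}_j(Z,\epsilon,\pi^\ell_k\Lambda)$. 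Because $\pi^\ell_k$ is linear, the pulled-back $q_j$ remains polynomial in $\Lambda$ and $\Phi_j$ retains the rational shape of \eqref{rationalJetParam} (same denominator, numerators still in $\C\{\epsilon\}[\Lambda]$). For the curve condition, given $(\epsilon(t),H(t))\in\parcur{r}$ the hypothesis $H(0)\in\mathcal F^\ell_{\epsilon(0)}$ means $H(0)\in\mathcal F_{k,\epsilon(0)}$ for some $k\le\ell$; the level-$k$ JPP then supplies $j\in J^{(k)}\subset J$ satisfying (a) and (b) at order $2\mathbf t k$, and since $\pi^\ell_k(j_p^{2\mathbf t\ell}H(t))=j_p^{2\mathbf t k}H(t)$ these pass verbatim to order $\tz$ for the lifted $q_j$ and $\Phi_j$.

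The step I expect to require the most care is the defining equation \eqref{defEquationJetParam}, i.e. the identity $A_\epsilon=j_p^{\tz}(\mathcal F^\ell_\epsilon)=\bigcup_{k} j_p^{\tz}(\mathcal F_{k,\epsilon})$. The subtlety is that vanishing of the lifted functions $c^{(k),j}_i(\epsilon,\pi^\ell_k\Lambda,\overline{\pi^\ell_k\Lambda})$ only forces the truncation $\pi^\ell_k\Lambda$ into $j_p^{2\mathbf t k}(\mathcal F_{k,\epsilon})$, not that the full jet $\Lambda$ is the $\tz$-jet of an actual map. I would remedy this by adjoining to the list $\{c^j_i\}$, for $j\in J^{(k)}$, the \emph{consistency equations} stating that the components $\Lambda_\alpha$ with $2\mathbf t k<|\alpha|\le 2\mathbf t\ell$ agree with the corresponding Taylor coefficients of $\Phi_j(\cdot,\epsilon,\Lambda)$. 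After clearing the denominator $q^{(k)}_j$ and separating real and imaginary parts, these become real-valued functions that are real-analytic in $\epsilon$ and polynomial in $(\Lambda,\bar\Lambda)$, exactly the form required. Property (b) at level $k$ guarantees that $\Phi^{(k)}_j$ reconstructs the whole map from its $2\mathbf t k$-jet, so on the locus where $\pi^\ell_k\Lambda\in j_p^{2\mathbf t k}(\mathcal F_{k,\epsilon})$ these extra equations are met exactly by the genuine $\tz$-jets; this yields the desired equality after taking the union over $j\in J$.

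Finally, \eqref{defEquationJetParamcurves} for the combined family follows in the same manner: for a curve $(\epsilon(t),H(t))\in\parcur{r}$ the relation holds at the relevant level $k$, and precomposition with the linear $\pi^\ell_k$ together with the $O(t^{r+1})$ estimate of the consistency equations (which are polynomial in the jet and hence inherit the vanishing order) transports it to order $\tz$. This establishes that $(\mathcal F^\ell_\epsilon)_{\epsilon\in X}$ satisfies \cref{def:jetparamdef} with $\tz=2\mathbf t\ell$, as claimed; apart from the jet-order bookkeeping just described, every verification is a formal transcription of the statements of \cref{jetparamdef}.
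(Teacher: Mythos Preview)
Your approach is correct and matches the paper's intent: the paper gives no proof beyond the remark that the corollary ``is immediate from the definition of the JPP and the preceding theorem,'' which is exactly your disjoint-union-over-$k$ construction lifted to the common jet order $\tz=2\mathbf t\ell$. Your explicit treatment of the consistency equations (forcing the higher jet components $\Lambda_\alpha$, $2\mathbf t k<|\alpha|\le 2\mathbf t\ell$, to agree with the Taylor coefficients of $\Phi_j$) is a detail the paper leaves implicit but which is indeed needed for \eqref{defEquationJetParam} to hold at the larger jet level; your handling of it is fine.
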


In order to show \cref{jetparamdef} we follow the line of thought as in \cite{dSLR17}. The next Lemma is an immediate consequence of \cite[Prop. 25, Cor. 26]{Lamel01} and of standard parametrization techniques. The key fact we need to use is that in the basic identity of \cite[Prop. 25]{Lamel01} the map $\Psi$ will depend analytically on $\epsilon$, since the $\Phi_j$ appearing in the proof depend polynomially on finitely many derivatives of the defining function of $M'$. Furthermore the implicit function theorem used to obtain $\Psi$ from the $\Phi_j$ preserves analyticity in $\epsilon$; using \cite{dSLR17}*{Prop. 37} allows to prove the following.

\begin{lem}\label{lem:derivBasicIdentitydef}
Under the assumptions of \cref{jetparamdef} the following holds: For all $\ell \in \N$ and $j \in J_{k_0}$ there exists a holomorphic mapping $\Psi^j_{\ell}: X \times \C^N \times \C^N \times \C^{K(k_0 + \ell) N'} \to \C^{N'}$ such that for every curve $(\epsilon(t),H(t)) \in \parcur{r}_{(\epsilon,H)}$ with $H\in \mathcal F_{k_0,\epsilon}$ such that $s^j_H(p)\neq 0$, where $s^j_H$ is given as in \eqref{folcon}, we have for sufficiently small $t$
\begin{align}
\label{derivBasicIdentitydef}
\partial^{\ell} H(Z,t) = \Psi^j_{\ell}(\epsilon(t),Z,\zeta, \partial^{k_0+ \ell} \bar H(\zeta,t)) + O(t^{r+1}), 
\end{align}
for $(Z,\zeta)$ in a neighborhood of $(p,\bar p)$ in $\mathcal M$, where $\partial^{\ell}$ denotes the collection of all derivatives up to order $\ell$. Furthermore there exist polynomials $P^{\ell,j}_{\alpha, \beta},Q_{\ell,j}$ and integers $e^{\ell,j}_{\alpha,\beta}$ such that 
\begin{align}
\label{derivBasicIdentityRational}
\Psi_{\ell,j}(\epsilon,Z,\zeta,W) = \sum_{\alpha,\beta \in \N_0^N} \frac{P^{\ell,j}_{\alpha,\beta}(\epsilon,W)}{Q_{\ell,j}^{e^{\ell,j}_{\alpha,\beta}}(\epsilon,W)} Z^{\alpha} \zeta^{\beta}.
\end{align}

\end{lem}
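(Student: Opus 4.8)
The plan is to reduce the statement to the basic identity of \cite[Prop.~25]{Lamel01} for a fixed target, and then to carry the two extra features of our setting — the analytic dependence on the deformation parameter $\epsilon$ and the tangency-to-order-$r$ condition — through the construction. First I would fix $j=(\iota,\ell)\in J_{k_0}$ and a curve $(\epsilon(t),H(t))\in\parcur{r}_{(\epsilon,H)}$ with $H\in\mathcal F_{k_0,\epsilon}$ and $s_H^j(p)\neq 0$. By \cref{def:parcur} the defining equations of $M'_{\epsilon(t)}$ hold along $H(\cdot,t)$ only up to order $t^{r+1}$; since these are real-analytic identities on $M$, they complexify, so on the complexification $\mathcal M$ of $M$ one has
\[ \varrho_\mu\bigl(H(Z,t),\bar H(\zeta,t),\epsilon(t)\bigr)=O(t^{r+1}),\qquad (Z,\zeta)\in\mathcal M, \]
for each of the finitely many generators $\varrho_\mu(w,\bar w,\epsilon)$ of $\mathcal I(\mathfrak D)$.

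Next I would reproduce the iteration of \cite[Prop.~25, Cor.~26]{Lamel01}. Applying the CR vector fields $L^{\iota_1},\dots,L^{\iota_{N'}}$ to the complexified equations and using the chain rule produces a linear system for the first-order data of $H(Z,t)$ whose coefficient matrix is precisely the one whose determinant is $s_H^j$ in \eqref{folcon}; by hypothesis this determinant is nonvanishing at $p$, hence on a neighborhood of $(p,\bar p)$ in $\mathcal M$. Cramer's rule solves the system, and differentiating the equations together with the expressions already obtained and reapplying Cramer's rule yields, after $\ell$ rounds, a map $\Psi_\ell^j$ with
\[ \partial^\ell H(Z,t)=\Psi_\ell^j\bigl(\epsilon(t),Z,\zeta,\partial^{k_0+\ell}\bar H(\zeta,t)\bigr)+O(t^{r+1}). \]
Because every step is a differentiation, a composition, or an application of Cramer's rule with $s_H^j$ in the denominator, the output is rational in the jet argument $W$, with denominator a power of $s_H^j$ and numerators polynomial in $W$; this is exactly the form \eqref{derivBasicIdentityRational}, with $Q_{\ell,j}$ a power of the nondegeneracy determinant.

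The two points needing genuine care, and the main obstacle relative to the fixed-target case, are the analytic dependence on $\epsilon$ and the propagation of the error. For the former, I would use that the generators lie in $\mathcal C^\omega(X)\{w-p',\overline{w-p'}\}$, so all their derivatives, and hence the coefficients of every linear system above, depend analytically on $\epsilon$; the inversion step producing $\Psi_\ell^j$ from these coefficients preserves this analyticity — this is the role of \cite[Prop.~37]{dSLR17} — so that $\Psi_\ell^j$ is holomorphic on $X\times\C^N\times\C^N\times\C^{K(k_0+\ell)N'}$ and the numerators $P^{\ell,j}_{\alpha,\beta}$ lie in $\C\{\epsilon\}[W]$. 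For the latter, since $\Psi_\ell^j$ is rational with nonvanishing denominator near the relevant jet and hence smooth there, substituting equations that are accurate only to $O(t^{r+1})$ alters the output by $O(t^{r+1})$; tracking this error consistently through the iteration is the delicate bookkeeping that the phrase ``standard parametrization techniques'' conceals.
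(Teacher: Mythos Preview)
Your proposal is correct and follows essentially the same approach as the paper, which simply declares the lemma an immediate consequence of \cite[Prop.~25, Cor.~26]{Lamel01} together with \cite[Prop.~37]{dSLR17}, noting that the $\Phi_j$ in Lamel's proof depend polynomially on finitely many derivatives of the defining function (hence analytically on $\epsilon$) and that the inversion step preserves this analyticity. Your write-up spells out the Cramer-rule mechanism behind the rational form and the $O(t^{r+1})$ bookkeeping in more detail than the paper does, but the route is the same.
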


Evaluating \eqref{derivBasicIdentitydef} along Segre sets we obtain the following result. Notice that the Segre sets involved do not depend on $\epsilon$, so the analytic dependence on $\epsilon$ is preserved, when equations of the form \eqref{derivBasicIdentitydef} are evaluated along the Segre sets.

\begin{cor}\label{cor:iterationSegredef}
For fixed $j\in J, q \in \N$ with $q$ even there exists a holomorphic mapping $\varphi^j_{q}: X \times \C^{qn} \times \C^{K(q k_0) N'}\to \C^{N'}$ such that for every curve $(\epsilon(t),H(t)) \in \parcur{r}_{(\epsilon,H)}$ with $H \in \mathcal F_{k_0,\epsilon}$ such that $s^j_H(p)\neq 0$, where $s^j_H$ is given as in \eqref{folcon}, we have for sufficiently small $t$
\begin{align}
\label{iterationSegredef}
H(S^q_p(x^{[1;q]}),t) = \varphi_{q}^j(\epsilon(t), x^{[1;q]}, j_p^{q k_0} H(t))+O(t^{r+1}).
\end{align}
Furthermore there exist (holomorphic) polynomials $R^{q,j}_{\gamma},S_{q,j}$ and integers $m^{q,j}_{\gamma}$ such that 
\begin{equation}
\label{iterationSegreRationaldef}
\varphi^j_{q}(\epsilon,x^{[1;q]},\Lambda) =  \sum_{\gamma \in \N_0^{qn}} \frac{R^{q,j}_{\gamma}(\epsilon,\Lambda)}{S_{q,j}^{m^{q,j}_{\gamma}}(\epsilon,\Lambda)} (x^{[1;q]})^{\gamma}.
\end{equation}
\end{cor}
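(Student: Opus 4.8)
The plan is to derive \eqref{iterationSegredef} and \eqref{iterationSegreRationaldef} by iterating the basic identity \eqref{derivBasicIdentitydef} from \cref{lem:derivBasicIdentitydef} along the Segre map $S^q_p$, tracking at each stage that the dependence on $\epsilon$ stays analytic and that the rational structure in the jet variable is preserved. First I would recall that the Segre map is built recursively via $S^q_p(x^{[1;q]}) = (x_1, Q(x_1, \overline S^{q-1}_p(x^{[2;q]})))$, so that substituting $Z = S^q_p(x^{[1;q]})$ into the identity for $\partial^\ell H(Z,t)$ forces the appearance of the conjugate jet $\partial^{k_0+\ell}\bar H(\zeta,t)$ evaluated at $\zeta = \overline{S^{q-1}_p(x^{[2;q]})}$. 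This is precisely the point where the induction feeds on itself: the value of $H$ (and its derivatives) along the $q$-th Segre set is expressed through the conjugate derivatives along the $(q-1)$-th Segre set.

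The main step is therefore an induction on $q$. For the base case $q=1$ one uses $S^1_p(x_1) = (x_1,0)$ and the $\ell=0$ instance of \eqref{derivBasicIdentitydef}, which expresses $H(S^1_p(x_1),t)$ through a finite jet of $\bar H$ at $\bar p$; since that jet is a component of $j_p^{k_0}H(t)$ (up to conjugation, which is harmless as we may absorb conjugate jet components into the jet variable $\Lambda$), this gives \eqref{iterationSegredef} for $q=1$ with $\varphi^j_1$ of the required form. For the inductive step I would substitute $Z = S^q_p(x^{[1;q]})$ and $\zeta = \overline{S^{q-1}_p(x^{[2;q]})}$ into \eqref{derivBasicIdentitydef}; by the inductive hypothesis the collection of conjugate derivatives $\partial^{k_0+\ell}\bar H$ at that conjugate Segre point is itself a rational expression in $\epsilon$ and $j_p^{(q-1)k_0}\bar H(t)$ with denominator a power of $S_{q-1,j}$, again up to an $O(t^{r+1})$ error. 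Composing the rational map $\Psi^j_\ell$ of \eqref{derivBasicIdentityRational} with this rational expression yields a rational function in $\epsilon$ and $j_p^{qk_0}H(t)$; the composition of two rational maps is rational, the total jet order used is $k_0 + (q-1)k_0 = qk_0$ matching the statement, and the two $O(t^{r+1})$ errors add to another $O(t^{r+1})$ error. Collecting terms and expanding in powers of $x^{[1;q]}$ produces the series \eqref{iterationSegreRationaldef} with the polynomials $R^{q,j}_\gamma$, the denominator polynomial $S_{q,j}$, and the exponents $m^{q,j}_\gamma$.

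I expect the bookkeeping of the rational structure under composition to be the main obstacle, though it is conceptually routine. Concretely, one must check that clearing denominators does not destroy analyticity in $\epsilon$ (the denominators $Q_{\ell,j}$ and $S_{q-1,j}$ are nonvanishing near the relevant base point because $s^j_H(p)\neq 0$ is assumed, exactly as in \cref{lem:derivBasicIdentitydef}), and that the substitution $\zeta = \overline{S^{q-1}_p}$ does not introduce new $\epsilon$-dependence since the Segre sets of $M$ are independent of the deformation parameter $\epsilon$, as noted before the statement. The parity condition ``$q$ even'' enters because one needs to terminate the recursion so that $H$ itself (rather than $\bar H$) appears on the left-hand side and so that the minimality hypothesis guarantees $S^q_p$ is generically of full rank for $q \geq \mathbf t$; I would invoke this only to ensure the identity captures the full jet. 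The uniformity of the error term $O(t^{r+1})$ across all the finitely many derivative orders and Segre levels involved follows because at each of the finitely many steps the map $\Psi^j_\ell$ is holomorphic and the curves lie in $\parcur{r}_{(\epsilon,H)}$, so the argument in \cref{lem:derivBasicIdentitydef} applies verbatim at each stage.
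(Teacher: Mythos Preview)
Your proposal is correct and follows the same approach the paper indicates: the paper does not write out a proof but states that the result is obtained by ``evaluating \eqref{derivBasicIdentitydef} along Segre sets'' and noting that the Segre sets are independent of $\epsilon$, which is precisely the induction on $q$ you describe. One small remark: your comment that the parity condition is tied to the generic full-rank property of $S^q_p$ is misplaced here, since the corollary is a pure identity along the Segre map and does not use full rank (that enters only in the subsequent proof of \cref{jetparamdef}); the role of ``$q$ even'' is solely the one you identify first, namely that the iteration terminates with the jet of $H$ rather than of $\bar H$.
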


\begin{proof}[Proof of \cref{jetparamdef}] The following is an adaptation of the proof as given in \cite{dSLR17}*{proof of Thm. 36}. By the choice of $\mathbf t\leq d+1$, the Segre map $S^{\mathbf t}_p$ is generically of maximal rank. By Lemma 4.1.3 in \cite{BER99}, the Segre map $S^{2\mathbf t}_p$ is of maximal rank at $p$. Using the constant rank theorem, there exists a neighborhood $\mathcal V$ of $S^{2\mathbf t}_p$ in $(\mathbb C\{x^{[1;2\mathbf t]}\})^N$ and a map $T:\mathcal V\to (\C\{Z\})^{2\mathbf t n}$ such that $A\circ T(A)=Id$ for all $A\in \mathcal V$.
%Arguing as in \cite[Proposition 4.1.18]{BER99}, 
We now define the holomorphic map
%there exist a neighborhood $\mathcal V$ of $S^{2\mathbf t}_0$ in $(\mathbb C\{x^{[1;2\mathbf t]}\})^N$ and a holomorphic map 
\begin{equation}\label{defphi}\phi:\mathcal V\times (\mathbb C\{x^{[1;2\mathbf t]}\})^{N'} \to (\mathbb C\{Z\})^{N'}, \quad \phi(A,\psi)=\psi(T(A)).
\end{equation}
 Thus we have  that $\phi(A,h\circ A) = h(A(T(A)))=h$ for all $A\in \mathcal V$ for all $h\in(\mathbb C\{Z\})^{N'}$.  
We define $\Phi_j(\epsilon,\cdot, \Lambda)=\phi(S_p^{2\mathbf t},\varphi^j_{{2\mathbf t}}(\epsilon,\cdot, \Lambda))$. Note that $\Phi_j$ depends analytically on $\epsilon$, applying $\phi(S_p^{2\mathbf t},\cdot)$ to both sides of equation \eqref{iterationSegredef} with $q=2\mathbf t$ we get
\begin{align*}
H(t) & =  \phi(S_0^{2\mathbf t},H(t)\circ S^{2\mathbf t}_0)=\phi(S_0^{2\mathbf t},\varphi^j_{{2\mathbf t}}(\epsilon(t),\cdot, j_0^{{2\mathbf t k_0}} H(t)) + O(t^{r+1})) \\
 & = \Phi_j(\epsilon(t),\cdot,  j_0^{{2\mathbf t k_0}} H(t)))+O(t^{r+1}) .
\end{align*}
\begin{comment}
\begin{equation*}
H= \phi(S_p^{2\mathbf t},H\circ S^{2\mathbf t}_p)=\phi(S_p^{2\mathbf t},\varphi^j_{{2\mathbf t}}(\epsilon,\cdot, j_p^{{2\mathbf t k_0}} H))= \Phi_j(\epsilon,\cdot, j_p^{{2\mathbf t k_0}} H),
\end{equation*}
\end{comment}
which gives (b) in JPP. %Recall now that \cref{cor:iterationSegredef} is based on a fixed choice of $j\in J$. In what follows, instead, we write explicitly the dependence of the objects on $j$.
By setting $q_{j}(\epsilon,\Lambda) = S_{2\mathbf t,j}(\epsilon,\Lambda)$, where $S_{2\mathbf t,j}$ is given in \eqref{iterationSegreRationaldef}, a direct computation using \eqref{iterationSegreRationaldef} and \eqref{defphi} allows to derive the expansion in \eqref{rationalJetParam}.
Let $\mathcal U_{j}$ be a neighborhood of $\{p\}\times U_{j}$ in $\mathbb C^N$ such that $\Phi_{j}$ is convergent on $\mathcal U_{j}$. By applying the usual procedure of plugging the form \eqref{rationalJetParam} into the mapping equation (after choosing a parametrization $t\mapsto \Sigma(t)$ of $M$) and developing in powers of $t$ we obtain \eqref{defEquationJetParam}. The $c_i^j$ appear as coefficients of the powers of $t$ in the expansion and depend analytically on $\epsilon$, because $\Phi_j$ and every generator of $\mathcal I_{p'}(M'_\epsilon)$ do. The remaining fact about $c^j_i$ follows in the same way as in \cite{dSLR17}*{proof of Thm. 36}.
\end{proof}

\section{Construction of a singular mapping locus}
\label{sec:singularlocus}

In this section we are going to provide an example of a mapping locus whose irreducible components are singular. This phenomenon cannot happen in the equidimensional case and highlights one difference with the positive codimensional case.

Let $N,N'\in \mathbb N$ with $2N>N'$. We write coordinates on $\mathbb C^{N+1}$ as $(Z,w)$ with $Z=(z_1,z_2,\ldots, z_N)$, and on $\mathbb C^{N'+1}$ as $(Z',w')$ with $Z'=(z_1',z_2',\ldots, z_{N'}')$. For any $j,k\in \mathbb N$, we denote by $\mathfrak N(j,k)$ the number of monomials in $j$ variables of order less than or equal to $k$.

Let $M=\{{\rm Im}\, w=|z_1|^2+ |z_2|^2+\ldots + |z_N|^2\}\subset \mathbb C^{N+1}$. Let $\mathfrak J^{k,d}_0$ be the space of $k$-jets at $0$ of real maps $\rho':\mathbb C^{N'+1}\to \mathbb R^d$ such that $\rho'(0)=0$, and let $\mathcal J^k_0$ be the space of $k$-jets of holomorphic maps $\psi:\mathbb C^{N'+1}\to\mathbb C^{N'+1}$ such that $\psi(0)=0$. 
We can define a real-algebraic action of $\mathcal J^k_0$ on $\mathfrak J^{k,d}_0$ in the natural way:
if $\psi\in \mathcal J^k_0$ and $\rho'\in \mathfrak J^{k,d}_0$, then $\psi\cdot \rho'=j^k_0(\rho'\circ\psi )$.

\begin{lemma}
\label{lem:semiB}
There exists $k_0\in \mathbb N$ and a semi-algebraic subset $B\subset \mathfrak J_0^{k_0,d}$ of positive codimension such that if $\rho': \mathbb C^{N'+1}\to \mathbb R^d$ is a local defining function such that $M$ admits a holomorphic embedding in $\{\rho'=0\}$ passing trough $0$ then $j^{k_0}_0 \rho'\in B$.
\end{lemma}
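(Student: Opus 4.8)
The plan is to exhibit an explicit semi-algebraic obstruction living in a jet space of defining functions, arising from a dimension count between the jets of embeddings of $M$ and the jets of defining functions that can vanish on the image. First I would set up the relevant jet spaces precisely: fix a large enough order $k_0$ (to be determined by the degree of nondegeneracy one wishes to control) and consider the real-algebraic variety
\[
\mathcal{E} = \left\{ (h, \rho') \in \mathcal{J}_0^{k_0} \times \mathfrak{J}_0^{k_0,d} \colon j_0^{k_0}\bigl(\rho' \circ \Xi(h)\bigr)|_{M} = 0 \right\},
\]
where $\Xi(h)$ denotes a holomorphic map with prescribed $k_0$-jet $h$ (one must check this condition is well-defined on jets, i.e.\ depends only on $j_0^{k_0} h$ after truncation, which it does because $\rho'(0)=0$ and we truncate at order $k_0$). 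The condition that $M$ embeds into $\{\rho'=0\}$ through $0$ is precisely that there exists $h$ with invertible linear part such that $(h,\rho') \in \mathcal{E}$. The target set $B$ is then the projection of $\mathcal{E}$ (intersected with $\{h \text{ immersive}\}$) onto the second factor $\mathfrak{J}_0^{k_0,d}$.

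The key step is the \textbf{dimension count}. Since the projection of a semi-algebraic set is semi-algebraic by the Tarski--Seidenberg theorem, $B$ is automatically semi-algebraic; the real content is that $B$ has \emph{positive codimension}. I would estimate $\dim \mathcal{E}$ fiberwise: the equations $j_0^{k_0}(\rho'\circ\Xi(h))|_M = 0$ impose a number of independent real-algebraic constraints that grows like $d \cdot \mathfrak{N}(N+1, k_0)$ minus the constraints automatically satisfied from $\rho'(0)=0$, while the space of pairs has dimension $\dim \mathcal{J}_0^{k_0} + \dim \mathfrak{J}_0^{k_0,d}$. The hypothesis $2N > N'$ is what forces the image point count to favor the obstruction: because the source sphere-like manifold $M \subset \C^{N+1}$ has real dimension $2N+1$ while the ambient target allows fewer free defining-function jets than embedding jets, a generic $\rho'$ cannot be hit. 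Concretely, I expect that for $k_0$ large the number of equations exceeds the dimension of the fiber of the projection $\mathcal{E} \to \mathcal{J}_0^{k_0}$ over a fixed $h$, so that the image $B$ is a proper semi-algebraic subset, hence of positive codimension in $\mathfrak{J}_0^{k_0,d}$.

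The main obstacle will be making the dimension count rigorous rather than heuristic: one must verify that the constraints cutting out $B$ are genuinely independent on a set of full measure, so that the naive codimension is not absorbed by degeneracies in the action of $\mathcal{J}_0^{k_0}$. I would handle this by exhibiting a single explicit pair $(h_0,\rho'_0)$ where the differential of the constraint map has the expected maximal rank, and then invoke semi-continuity of rank to conclude the generic fiber has the predicted dimension; the positive-codimension conclusion for $B$ follows. The inequality $2N > N'$ enters precisely here, guaranteeing that the Jacobian of the Taylor-coefficient matching map has a nontrivial cokernel of controlled dimension. Everything else (semi-algebraicity of $\mathcal{E}$, semi-algebraicity of the projection, well-definedness on jets) is routine once the jet-space bookkeeping and the real-algebraic structure of the $\mathcal{J}_0^{k_0}$-action described before the lemma are in place.
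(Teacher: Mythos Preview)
Your plan is essentially the paper's argument, only packaged as ``incidence variety and project'' rather than ``slice and take orbit.'' The paper fixes the standard embedding $(Z,w)\mapsto(Z,0,w)$, defines the linear slice $B'_k=\{\,j_0^k\rho':\rho'(Z,0,u+i\|Z\|^2)=O(k+1)\,\}$, checks from the triangular chain-rule system that $B'_k$ is a smooth algebraic submanifold of codimension $d\cdot\mathfrak N(2N+1,k)$, and then sets $B_k=\mathcal J_0^k\cdot B'_k$; since every holomorphic embedding of $M$ can be straightened to the standard one by a target biholomorphism, this orbit contains every admissible $j_0^k\rho'$, and the trivial bound $\dim B_k\le\dim B'_k+\dim\mathcal J_0^k$ gives codimension at least $d\,\mathfrak N(2N+1,k)-2\,\mathfrak N(N'+1,k)$, positive for large $k$ precisely because $2N>N'$. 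Your fibre $\mathcal E_{h_0}$ over the standard $h_0$ is exactly the paper's $B'_k$, and your variation over $h$ replaces the group orbit.

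One slip to repair: the number of constraints imposed by $j_0^{k_0}(\rho'\circ h)|_M=0$ is $d\cdot\mathfrak N(2N+1,k_0)$, not $d\cdot\mathfrak N(N+1,k_0)$. You are matching Taylor coefficients of a real function on the $(2N{+}1)$-dimensional manifold $M$, so the count is in $2N+1$ real variables; with the number you wrote, the codimension estimate does not close (for $d=1$ it would be negative). The explicit full-rank verification you propose is exactly what the paper supplies via the triangular structure of the system \eqref{DprimeJ}. Also note that in the paper $\mathcal J_0^{k_0}$ denotes jets of holomorphic self-maps of $\C^{N'+1}$, not jets of maps $\C^{N+1}\to\C^{N'+1}$; you are silently using it for the latter.
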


\noindent (With an abuse of notation, we will identify with $B$ the analogous set $B_p$ contained in the space $\mathfrak J^{k_0, d}_p$ of jets about $p\neq 0$.)

\begin{proof}: For any $k\in \mathbb N$, consider the subset $B'_k\subset \mathfrak J_0^{k,d}$ consisting of the $k$-jets of functions $\rho':\mathbb C^{N'+1}\to\mathbb R^d$ satisfying the relation
\begin{equation}\label{straight}
\rho'(Z,  0 , u+i\|Z\|^2)=O(k+1).
\end{equation}
For any multi-indices $J\in \mathbb N^{2N+1}$ and $K\in \mathbb N^{2N'+1}$, where $J=(j,\overline j,\ell)$ with  $j=(j_1,\ldots, j_N)$, $\overline j=(\overline j_1, \ldots, \overline j_N)$ and $K=(k,\overline k,m)$ with $k=(k_1,\ldots, k_{N'})$, $\overline k=(\overline k_1, \ldots, \overline k_{N'})$ let
 \[D_J=\frac{\partial^{|J|}}{\partial Z^j \partial \overline Z^{\overline j} \partial u^\ell}, \ \ D'_K=\frac{\partial^{|K|}}{\partial {Z'}^k \partial \overline {Z'}^{\overline k} \partial {u'}^m}.\]
  Applying the $D_J$ derivative (with $|J|\leq k$) to the left hand side of (\ref{straight}), using the chain rule and evaluating at $0$ we obtain 
 \begin{equation}\label{DprimeJ}
 D'_J\rho'(0) + \mbox{(algebraic expression in lower order derivatives)} = 0
 \end{equation}
 The system of equations in $\mathfrak J^{k,d}_0$ given by \eqref{DprimeJ} (with $|J|\leq k$) has a Jacobian of full rank at $0$, as can be easily verified by taking in account the triangular structure of the system.  Hence $B'_k$ is a smooth algebraic submanifold of $\mathfrak J^{k,d}_0$, of codimension $d \cdot \mathfrak N(2N+1,k)$.

Note that the image $H(M)$ of any holomorphic embedding of $M$ into $\mathbb C^{N'+1}$ can be turned into the manifold
\begin{equation}\label{N+1plane}
\{z_{N+1}'=\ldots = z'_{N'} =0,\ {\rm Im}\, w'=|z_1'|^2 +\ldots + |z'_N|^2\}
\end{equation}
by a local change of coordinates. Indeed, if $H:M\to H(M)$ is such an embedding, we can first apply a change of coordinates in $\mathbb C^{N'+1}$ which sends $H(\mathbb C^{N+1})$ to the complex $(N+1)$-plane $\{z_{N+1}'=\ldots = z'_{N'} =0\}$. After this, we can apply a new coordinate change involving only $z_1',\ldots, z_N'$ which sends $H(M)$ to the manifold described by \eqref{N+1plane}.

Thus, for any $k\in \mathbb N$ the set $B_k:=\mathcal J^k_0\cdot B'_k$ (i.e.\ the orbit of $B'_k$ by the action of $\mathcal J^k_0$) contains all the $k$-jets of local defining functions $\rho'$ such that $\{\rho'=0\}$ admits a local embedding of $M$. Moreover, $B_k$ is a semi-algebraic subset of $\mathfrak J^{k,d}_0$ because it is the image of a (real)-algebraic map defined on (real)-algebraic manifolds.  Since the dimension of $\mathcal J^k_0$ is $2\mathfrak N(N'+1,k)$, the dimension of $B_k$ is at most $\dim(B'_k)+2\mathfrak N(N'+1,k)$, and its codimension is at least $d\mathfrak N(2N+1,k)-2\mathfrak N(N'+1,k)$. Since $2N>N'$, there exists $k_0$ such that $d\mathfrak N(2N+1,k_0)-2\mathfrak N(N'+1,k_0)>0$. The proof of the lemma is concluded by putting $B=B_{k_0}$.
\end{proof}

\begin{proof}[Proof of \cref{thm:singularLocus}]
Write $n = N - 1$. Let $u,Z=(z_1,\ldots, z_n),\tau$ be coordinates in $\mathbb R\times \mathbb C^n\times \mathbb C$ with $Z=(z_1,\ldots, z_n)$, $z_j=x_j+i y_j$, $\tau=s+it$. Let $Y\subset \mathbb C$ be a singular real-analytic subset defined as $Y=\{r(s,t)=0\}$ with $r$ vanishing at $0$.
For certain (real) polynomial functions $\alpha_1,\ldots, \alpha_n,\beta,\gamma:\mathbb R\times \mathbb C^n\times \mathbb C\to \mathbb C$, to be determined later, we define the following map $\phi:\mathbb R\times \mathbb C^n\times \mathbb C\to \mathbb C^{N+1}$:
\[\phi(u,Z,\tau)=(z_1,\ldots, z_n, z_1^2+\ldots+z_n^2+ \tau, u+i|Z|^2)  + r(\tau, \bar \tau)  \left(  \alpha_1, \ldots, \alpha_n,\beta,\gamma \right) .\]
Then $d\phi$ has (real) rank $2 n + 3$ (at least around the origin) and $M'=\phi(\mathbb R\times \mathbb C^n\times \mathbb C)\subset \mathbb C^{N+1}$ is a real-analytic hypersurface (at least around the origin). We denote by $\rho'$ the (uniquely determined) defining function of $M'$ of the form $\rho'={\rm Im}\, w' - f(Z', {\rm Re}\, w')$.

For any $\tau_0\in Y$, the map $\psi_{\tau_0}:\mathbb C^N\to\mathbb C^{N+1}$ defined as
\[\psi_{\tau_0}(Z,w)=(z_1,\ldots,z_n,z_1^2+\ldots+z_n^2+\tau_0, w)\]
is an embedding of $M$ into $M'$. Define
\begin{align*}
C \coloneqq \bigcup_{\tau_0\in Y} \psi_{\tau_0}(M).
\end{align*}
Note that $C$ is a real-analytic subset of $M'$ of dimension $2N$ and $C\cong Y \times M$.
Let $E\subset M'$ be the mapping locus, i.e. the set of the points of $M'$ admitting a local holomorphic mapping of $M$. By construction and the homogeneity of $M$ it holds that $C \subset E$. We want to show that for a suitable choice of $\alpha_1,\ldots, \alpha_n,\beta,\gamma$ the set $E$ is contained in a proper semi-analytic subset $\widetilde E$ of $M'$ of dimension $2N$.
This proves the theorem, since if $C\subset E\subset \widetilde E$, where the dimension of $\widetilde E$ is equal to the dimension of $C$, then $E$ is a singular semi-analytic subset having $C$ as an irreducible component.

Let $\widetilde E$ be the set of the $p\in M'$ such that $j^{k_0}_p\rho'\in B$, where $B$ is given in \cref{lem:semiB} with $d=1$ and $N'=N$; then $\widetilde E$ is a semi-analytic subset of $M'$, as a preimage of the semi-algebraic set $B$ under the analytic map $p\mapsto j^{k_0}_p\rho'$. Moreover, by definition $E\subset \widetilde E$. %since $\widetilde E$ is the $k_0$-mapping locus and any mapping is a $k_0$-mapping.
Note that, since $B$ is semi-algebraic and of positive codimension, it is contained in the union of finitely many proper algebraic subvarieties $B_1,\ldots, B_\ell$ (this can be seen by disregarding the inequalities in the definition of $B$). To show that $\widetilde E$ is a semi-analytic subset of positive codimension, then it will be enough to check that for a suitable choice of $\alpha_1,\ldots,\alpha_n, \beta,\gamma$ and of $\tau_0$ close enough to $0$, putting  $\phi(0,\tau_0)=p_0$ we have $j^{k_0}_{p_0}\rho' \not\in B_j$ for all $j=1,\ldots, \ell$. In this case $\widetilde E$ is contained in the union of the real-analytic sets $X_j=\{p\in M': j^{k_0}_p\rho'\in B_j\}$, each of them proper since $p_0\not\in X_j$, and hence it has positive codimension.

To show the claim, choose $\tau_0=s_0+it_0$ close enough to $0$ such that $\tau_0\not\in Y$, consider the change of coordinates $s'=s-s_0,t'=t-t_0$, and let $q$ be such that $q(s',t')=r(s,t)$. We have $q(0,0)\neq 0$, so that we can consider the real power series centered at $0$ defining $1/q$. Let $\delta$ be any complex-valued power series in the variables $u,Z,\tau'=s'+it'$, and let $\alpha=j^{k_0} (\delta \frac{1}{q})$: then $\alpha$ is a polynomial in $u,Z,\tau'$ and 
\[j^{k_0} (q \alpha)= j^{k_0}\left(j^{k_0} q j^{k_0}\left(\delta \frac{1}{q}\right)\right) = j^{k_0}\left(j^{k_0}(\delta)j^{k_0}(q)j^{k_0}\left(\frac{1}{q}\right)\right) = j^{k_0} \delta.\]
In other words we can obtain any prescribed jet of $\phi$ at $(0,\tau_0)$, and hence any prescribed jet of $\rho'$ at $\phi(0,\tau_0)=p_0$, by the appropriate choice of $\alpha_1,\ldots,\alpha_n, \beta,\gamma$: in particular we can choose them in such a way that $j^{k_0}_{p_0}\rho'\not\in \bigcup_{j=1}^\ell B_j$.
\end{proof}

\begin{remark}
More specifically, the proof of \cref{thm:singularLocus} shows that any given singular set $Y$ of $\C$ can be realized in the following sense: We can construct a hypersurface $M' \subset \C^{N+1}$ such that there exists an irreducible component $C$ of $E_{\mathcal F}$ with the property that $C\cong Y \times M$.
\end{remark}

\section{Examples}
\label{sec:examples}

We wish to present a list of examples showing the various phenomena that can appear in the geometric structure of the mapping locus. Similar behaviors can be seen in the (equidimensional) case of the equivalence locus, cf. \cite[\S 7]{dSJL15}. Indeed that one is a special case of our setting. However we would like to construct examples in the positive codimension case.

\begin{example}
\label{ex:example1}
Let $(M,0)$ be the germ at $0$ of $M=\{\im w = |z|^2\} \subset \C^2$, and $M' = \{\im w' = |z_1'|^4 + |z_2'|^4\}$. In this case we take $\mathcal F = \mathcal F_k$ as the set of $k$-nondegenerate maps from $M$ into $M'$. Then the mapping locus $E_{\mathcal F}$ is equal to $M' \setminus Y$, where $Y=(\{z_1=0\} \cup \{z_2=0\})\cap M'$. Indeed, there are no $k$-nondegenerate maps $H$ from $M$ into $M'$, such that $H(0) \in Y$. This follows from the fact that $M'$ is not finitely nondegenerate at the points of $Y$ and that image of $k$-nondegenerate maps is contained in the set of points of $M'$ at which $M'$ is at most $k$-nondegenerate, cf. \cite[\S 3]{Lamel01}.
Moreover $M'\setminus Y \subset E_{\mathcal F}$: Let $p' = (\hat z_1',\hat z_2',\hat w') \in M'\setminus Y$ and $\psi(z_1',z_2',w') = ({z_1'}^2,{z_2'}^2,w')$. Denote $S^3 = \{\im w' = |z_1'|^2+ |z_2'|^2 \}$. Since $p' \not\in Y$ there is a neighborhood $U'$ of $p'$ such that $\psi: U' \rightarrow V' = \psi(U')$ is a biholomorphism and $\psi(U'\cap M') = S^3\cap V'$. A map $H: M \rightarrow M'$ with $H(0) = p'$ can then be constructed by choosing a $2$-nondegenerate map $F$ from $M$ into $S^3$ such that $F(0) = \psi(p')$ and taking $H = \psi^{-1} \circ F$.

This example shows that the mapping locus need not be an analytic variety, but in general it is just semi-analytic, that is, inequalities can indeed occur.
\end{example}

\begin{example}
\label{ex:example2}
Let $(M,0)$ be the germ at $0$ of $M=\{\im w = |z|^2 + |z|^4\} \subset \C^2$ and $M'=\{\im w' = |z_1'|^2 + |z_1'|^4 + (\re(z_1'))^2 \im(z_2') \} \subset \C^3$. Let $\mathcal F = \mathcal F_2$. The map $H_{s,t}: (z,w)\mapsto (z,t,w+s)$ sends $(M,0)$ into $M'$ for all $(s,t) \in \R^2$ and belongs to $\mathcal F$, hence $S= \{(0,t,s): (s,t)\in \R^2\}\subset E_{\mathcal F}$ and $T_p S \subset \hol(H_{s,t})(0)$, where $p=(0,t,s)$, cf. \cref{thm:tangentSpaceInfDef}. Indeed, a computation shows that $T_p S = \hol(H_{s,t})(0)$. %$\dim \hol(H_{s,t})(0) = 2$.
\end{example}

\begin{example}
\label{ex:example3}
Let $M, M', H_{0,t}$ and $\mathcal F$ be as in \cref{ex:example2}. For a closed subset $Y \subset \R$ choose a $C^\infty$ function $\Phi : \R \rightarrow \R$ vanishing exactly on $Y$. Define the $C^\infty$ submanifold $\widetilde M'$ by
\begin{align*}
\im w' = |z_1'|^2 + |z_1'|^4 + (\re(z_1'))^2 \im(z_2') + \Phi(\re(z_2')) K(z_1',z_2',\re w'),
\end{align*}
where we set $U\coloneqq \widetilde M' \cap \{\re(z_2') \not\in Y\}$ and choose $K$ a $C^\infty$-function in such a way that $U$ does not contain any analytic subvariety.
Then for $t \in Y$ it holds that $H_{0,t}(M) \subset \widetilde M'$, which implies that $Y'\coloneqq \bigcup_{t \in Y} \{H_{0,t}(0)\} \subset E_{\mathcal F}$. We would like to argue that $E_{\mathcal F} = Y'$: Indeed any holomorphic embedding of $M$ into $\widetilde M'$ has the property that its image has empty intersection with $U$, because $U$ does not contain any analytic submanifold.
\end{example}

\bibliography{ref}

\end{document}